\documentclass{amsart}
\usepackage{graphicx} % Required for inserting images
\usepackage{inputenc}
\usepackage{graphicx}
\usepackage{subcaption} % immagini
\usepackage{amsmath,mathtools}
\usepackage{amsfonts}

\usepackage{amssymb}
\usepackage{mathrsfs}
\usepackage{color}
\usepackage{enumerate}
\usepackage[colorlinks]{hyperref}
\usepackage{algorithm}
\usepackage{mdframed}
\usepackage{algorithmic}
\usepackage{siunitx}
\usepackage{multirow}
\usepackage{stackrel}
\usepackage{booktabs}
\usepackage{tabularx}
\usepackage{arydshln}
\usepackage{cite}
\usepackage{url}
\usepackage{comment}
\usepackage{listings}

\usepackage{booktabs}

\usepackage[top=2.8cm, bottom=2.8cm, left=2.6cm, right=2.6cm]{geometry}

\newtheorem{theorem}{Theorem}

\newtheorem{corollary}{Corollary}
\newtheorem{lemma}[theorem]{Lemma}

\theoremstyle{definition}

\theoremstyle{remark}
\usepackage{enumerate}

\newtheorem{Remark}{Remark}
\newtheorem{Assumption}{Assumption}

\begin{document}
                            
\title{Block diagonal preconditioners for arrow-shaped multiple saddle-point linear systems}
\author{Luca Bergamaschi and Michele Bergamaschi}
%\AuthorNames{L. Bergamaschi \and M. Bergamasch
\date{}
\footnotetext[1]{Department of Civil Environmental and Architectural Engineering, University of Padua, Via Marzolo, 9, 35100 Padua, Italy, \texttt{E-mail}: {luca.bergamaschi@unipd.it}}
\footnotetext[2]{\texttt{E-mail}: michele15456@gmail.com}

\begin{abstract}{We consider multiple saddle-point linear systems with block-arrow structure. We analyze exact and inexact Schur complement based block diagonal preconditioner, by characterizing the eigenvalues of the preconditioned matrices
in terms of the zeros of specific rational functions. Numerical results onto large tests coming from liquid crystal modeling and PDE-constrained optimization validate the obtained eigenvalue bounds.} \end{abstract}
\maketitle

%\section{Introduction}

\newcommand{\bB}{\mathbf B}
\newcommand{\bR}{\mathbf R}
\newcommand{\bBt}{\mathbf {\widetilde{B}}}
\newcommand{\bRt}{\mathbf {\widetilde{R}}}

\newcommand{\bE}{\mathbf E}
\newcommand{\bA}{\mathbf A}
\newcommand{\fz}{z}
\newcommand{\fu}{u}

\section{Introduction}
  We consider the iterative solution of a block  multiple saddle-point linear system $\mathcal{A}x = b$, where 
\begin{equation}
\label{SPmat}
\mathcal{A}=\left[\begin{array}{ccccc}
    A_0 & B_1^\top & B_2^\top & \dots & B_N^\top \\
    B_1 & -A_1 &  &  & \\
    B_2&   & -A_2 &  & \\
    \vdots &  &  & \ddots & \\
    B_N &  &  &  & -A_N
\end{array}\right].
\end{equation}
We assume that $A_0\in\mathbb{R}^{n_0\times n_0}$ is symmetric positive definite, all other square block matrices $A_k\in\mathbb{R}^{n_k\times n_k}$ are symmetric positive semi-definite and $B_k\in\mathbb{R}^{n_k\times n_{0}}$ have full rank (for $k=1,\dots, N$). 
We will assume throughout the paper that
\begin{Assumption}
\label{ass1}
\[ n_0 \ge \sum_{k=1}^N n_k.\]
\end{Assumption}
Note that this is a necessary condition for invertibility of $\mathcal A$ in the particular case $A_1 = A_2 = \ldots = A_N = 0.$ This result extends trivially the result in  \cite[Lemma 1]{BradleyGreif2026} to multiple saddle-point matrices.
A  sufficient condition for invertibility of $\mathcal A$ is obtained by considering it as as saddle-point matrix 
\begin{equation} \label{regsp}
 \mathcal  A = \begin{bmatrix} A_0 & \mathbf B^\top \\ \mathbf B & -\mathbf A_+\end{bmatrix}, \qquad \text{with} \ \ 
\mathbf B = \begin{bmatrix} B_1^\top & \ldots & B_N^\top\end{bmatrix}^\top, \ \mathbf A_+ = \text{blkdiag}(A_1, \ldots, A_N).
\end{equation}
As well known, a sufficient condition for invertibility is $A_0$ SPD and
$\bB$ with full row rank \cite[Theorem 3.1]{bgl05}. Translated to our multiple saddle-point matrix, this condition can be written as: 
\begin{Assumption}
\label{ass2}
 \[\bigcap _{k=1}^N \text{range} (B_k^\top) = \emptyset.\]
 \end{Assumption}
The position of nonzero blocks in \eqref{SPmat} accounts for the definition of \textit{arrow-shaped matrix}, or simply arrow matrix,
which we will use in the sequel.
Matrices of the form \eqref{SPmat} arise from several applications such as liquid crystal director modeling, interior point methods in constrained optimization, the Stokes-Darcy equation, poroelasticity, and magma-mantle dynamics, to mention a few. Examples of these problems can be found
in \cite{Szyld, greif2026, ramage2013preconditioned, iter:BGZ, BenziLC, FerFraJanCasTch19, BeikBenzi2022, Rhebergen, Antonietti2021}.
%{Denoting with  
%$S_1 = A_1 + B_1 A_0^{-1} B_1^\top$ and $S_k = A_k + %B_k{S}^{-1}_{k-1}B_k^\top$ (for $k = 2, \ldots, N$) the %Schur complements, % we finally assume
%that 
%\begin{equation} \label{ass0} \lambda_{\min}(A_k) \cdot \lambda_{\min}(B_k{S}^{-1}_{k-1}B_k^\top) \ne 0, \quad k = 1, \ldots, N.\end{equation} 
%these conditions ensure that $S_k$,
%$k = 1, \ldots, N$ are invertible,
%We assume also that $n_k\leq n_{k-1}$ for all $k$. 
%which also guarantees invertibility of $\mathcal{A}$ %\cite{Bradley,BEIK2024403,pearson2023symmetric}. }
Multiple saddle-point linear systems more commonly take a block tridiagonal form and have been recently analyzed, with a focus on preconditioning in \cite{bradley2023eigenvalue, Balani-et-al-2023a, Balani-et-al-2023b, BerFerMar25, Bergamaschi-COAP-2024}, for double saddle-point and, more generally in \cite{SZ, LBMB2026,  BBP_NLAA_2026, pilotto-et-al2026}, for multiple saddle-point matrices. A theoretical study on the preconditioned matrices having the form \eqref{SPmat} is still lacking. 
The focus of this work is the development
and the related spectral analysis of block diagonal Schur complement based preconditioners.

We will also briefly consider arrow-structured multiple saddle-point matrices with the "arrow" appearing in the last row and column. Even if such matrices can be symmetrically permuted into a block tridiagonal
form, maintaining the arrow block structure can have some advantages
in specific problems. We then adapt the previously  developed theory to predict the eigenvalue distribution of these block-diagonally preconditioned  matrices.

Considering SPD block diagonal preconditioners, we fulfill the assumptions that make the MINRES~\cite{minres} method applicable to our linear systems. The MINRES algorithm can explicitly exploit the symmetry of $\mathcal{A}$. As a consequence, MINRES features a three-term recurrence relation, which is beneficial for its implementation (low memory requirements because subspace bases need not be stored) and its purely eigenvalue-based convergence analysis (via the famous connection to orthogonal polynomials; see~\cite{fischer1996polynomial,greenbaum1997iterative}). 

The structure of the paper is as follows: in Section \ref{sec:Schur} we give an explicit form of the block Schur complements of the arrow matrices and prove that they are all SPD under the sole condition
that $\mathcal A$ is invertible. In Section \ref{sec:BDP} we present some (Schur complement based) block diagonal preconditioners, while Section \ref{sec:eig} is devoted to relate the eigenvalue distribution of
the preconditioned matrices, to the zeros of suitable polynomial/rational functions and validate the 
obtained bounds onto a synthetic multiple $5 \times 5$ saddle-point problem. Section \ref{sec:double}
is devoted to  check the bounds onto double saddle-point linear systems among which  the liquid crystal director modeling solved up to $10^7$ unknowns. In Section \ref{sec:downright} we take advantage of the previous analysis to describe the extremal eigenvalues of the preconditioned matrix in case of a "downright arrow" matrix structure and
validate the obtained bounds onto a realistic PDE-constrained optimization problem up to very large size.
Section \ref{sec:conc} provides some concluding remarks and future perspectives.

\begin{comment}

By contrast, GMRES needs to store subspace bases and its convergence analysis is, in general, dependent on the corresponding eigenspaces as well, which are more complicated to analyze than eigenvalues (see, e.g., \cite{embree2022descriptive}).

{
  We consider the iterative solution of a block tridiagonal multiple saddle-point linear system $\mathcal{A}x = b$, where 
\begin{equation}
\label{SPmat}
\mathcal{A}=\left[\begin{array}{ccccc}
    A_0 & B_1^T & B_2^T & \dots & B^T \\
    B_1 & -A_1 &  &  & \\
    B_2&   & -A_2 &  & \\
    \vdots &  &  & \ddots & \\
    B_N &  &  &  & -A_N
\end{array}\right]
\end{equation}
\end{comment}

\section{Schur complements: explicit expression and properties}
\label{sec:Schur}
We will first give theoretical expression of
all the Schur complements of matrix $\mathcal A$.
Next we will prove that all of them are SPD under
the sole condition that $\mathcal A$ is invertible. 
Matrix $\mathcal A$ admits the LDL factorization, namely it can be written as
\[ \mathcal A = \mathcal L \mathcal D \mathcal L^\top, \qquad \mathcal{D} = \text{blkdiag}(A_0, -S_1, -S_2, \ldots, -S_N). \]
Since $\mathcal L$ has unit diagonal, we can define it as
\[
\mathcal{L}=\left[\begin{array}{ccccc}
    I & 0 & 0 & \cdots & 0 \\
	F_{1, 1}A_0^{-1} & I &  &  & \\
	F_{2, 1}A_0^{-1}&  F_{2, 2}S_1^{-1} & I &  & \\
    \vdots &  &  & \ddots & \\
	F_{N, 1}A_0^{-1} & F_{N, 2}S_1^{-1} & F_{N, 3}S_2^{-1} & \cdots & I
\end{array}\right]
\]
\begin{theorem}
\label{theo1}
The Schur complements $S_k$ satisfy the following relation:
\begin{equation}
    \label{Sk}
  S_k =A_k + B_kA_0^{-1}B_k^{\top} - 
  \underbrace{\sum_{i = 2}^{k} F_{k, i}S_{i-1}^{-1}F_{k, i}^{\top}}_{\text{\normalsize $\Delta_k $}} \equiv A_k + B_kA_0^{-1}B_k^{\top} - \Delta_k
\end{equation}
where $F_{k, 1} =B_k$ and
\begin{equation} \label{Fk}
  F_{k, m} = F_{k, 1} A_0^{-1} F_{m-1, 1}^{\top} - 
	\sum_{j=2}^{m-1} F_{k, j}S_{j-1}^{-1}F_{m-1, j}^{\top}. \end{equation}
\end{theorem}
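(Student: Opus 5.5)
The plan is to compute the block product $\mathcal L \mathcal D \mathcal L^\top$ explicitly and match it block by block with $\mathcal A$ in \eqref{SPmat}.

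\emph{Strict lower part of $\mathcal L$.} Write $L_{k+1,1} = F_{k,1}A_0^{-1}$ for the block in block-row $k+1$ (i.e., the row corresponding to $A_k$) and block-column 1. For $2\le m\le k$, write $L_{k+1,m} = F_{k,m}S_{m-1}^{-1}$.

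\emph{Diagonal blocks.} With these definitions, $\mathcal D=\mathrm{blkdiag}(A_0,-S_1,\dots,-S_N)$ gives, for $k\ge1$,
\[
(\mathcal L\mathcal D\mathcal L^\top)_{k+1,k+1} = F_{k,1}A_0^{-1}F_{k,1}^\top - \sum_{i=2}^{k} F_{k,i}S_{i-1}^{-1}F_{k,i}^\top - S_k .
\]
Indeed, each term has the form $L\,D\,L^\top$, and the $S_{i-1}^{-1}S_{i-1}S_{i-1}^{-1}$ factors collapse. The last term comes from the unit diagonal of $\mathcal L$.

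\emph{Off-diagonal blocks.} For $1\le m-1<k$, the block in position $(k+1,m)$ is
\[
F_{k,1}A_0^{-1}F_{m-1,1}^\top - \sum_{j=2}^{m-1}F_{k,j}S_{j-1}^{-1}F_{m-1,j}^\top - F_{k,m}S_{m-1}^{-1}S_{m-1}.
\]
The last term comes from $L_{k+1,m}\,(-S_{m-1})\cdot I$. The first-column blocks are $(\mathcal L\mathcal D\mathcal L^\top)_{k+1,1}=F_{k,1}A_0^{-1}A_0=F_{k,1}$.

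\emph{Matching with $\mathcal A$.} Equating to $\mathcal A$ gives three conditions.
\begin{itemize}
\item[(i)] The first column yields $F_{k,1}=B_k$.
\item[(ii)] The zero off-diagonal blocks $\mathcal A_{k+1,m}=0$ for $2\le m\le k$ yield exactly the recursion \eqref{Fk} for $F_{k,m}$.
\item[(iii)] The diagonal blocks $-A_k$ yield $S_k = A_k + B_kA_0^{-1}B_k^\top-\Delta_k$, which is \eqref{Sk}.
\end{itemize}
Since $\mathcal L$ is unit block lower triangular, the factorization is unique whenever it exists. So the $S_k$ defined as Schur complements through block Gaussian elimination must coincide with those produced by this recursion.

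\emph{Well-posedness.} The recursion has to be well-defined, so the blocks should be determined in the order $S_1$, then $F_{2,2}$, $S_2$, then $F_{3,2},F_{3,3}$, $S_3$, and so on. Formula \eqref{Fk} for $F_{k,m}$ uses only $F_{k,j}$ with $j<m$, the $F_{m-1,j}$, and $S_{j-1}$ with $j\le m-1<k$, all of which are already available. I would set this up as an induction on $k$, assuming invertibility of $S_1,\dots,S_{k-1}$. That invertibility follows from the block LDL (Schur) elimination of $\mathcal A$ being carried out, which is the standing assumption implicit in writing the factorization. The paper promises to prove positive definiteness of the $S_k$ afterwards, so the plan only assumes they are invertible.

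\emph{Main obstacle.} The main difficulty is purely bookkeeping: keeping the block-row index shift (row $k+1$ versus $S_k$) consistent, and checking that the off-diagonal identity matches \eqref{Fk} exactly, including the range $j=2,\dots,m-1$ and the symmetric transpose placement. I would verify this first on $k=2,3$ explicitly, e.g.\ $F_{2,2}=B_2A_0^{-1}B_1^\top$ and $S_2=A_2+B_2A_0^{-1}B_2^\top-F_{2,2}S_1^{-1}F_{2,2}^\top$, before writing the general induction.
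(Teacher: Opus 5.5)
Your proposal is correct and follows essentially the same route as the paper: the paper also multiplies out $\mathcal L(\mathcal D\mathcal L^\top)$ block by block. It reads off $F_{k,1}=B_k$ from the first block column, the recursion \eqref{Fk} from the zero off-diagonal blocks, and \eqref{Sk} from the diagonal blocks $-A_k$. Your explicit treatment of the computation order, of the symmetric collapse $S_{i-1}^{-1}S_{i-1}S_{i-1}^{-1}$, and of uniqueness of the unit block $LDL^\top$ factorization is cleaner than the paper's index bookkeeping and makes explicit what the paper leaves implicit.
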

\begin{proof}
  The matrix $\mathcal{D}\mathcal{L}^{\top}$ can be written as \[
	\mathcal{D}\mathcal{L}^{\top} = 
	\left[\begin{array}{ccccc}
	  A_0 & F_{1, 1}^{\top} &  F_{2, 1}^{\top} &  \cdots &  F_{N, 1}^{\top} \\ 
	  0 & -S_1 & -F_{2, 2}^{\top} &  \cdots &  -F_{N, 2}^{\top} \\ 
	  0 & 0 & -S_2 & & -F_{N, 3}^{\top} \\ 
	  0 & 0 & 0 & \ddots & \vdots \\
	  0 & 0 & 0 & 0 & -S_{N}
	\end{array}\right]
  \] 
  Let $r_i$ and  $c_j$ denote the  $i$-th row of  $\mathcal{L}$ and  $j$-th column of 
  $\mathcal{D}\mathcal{L}^{\top}$ respectively (starting from $0$).
We have $B_k = \mathcal{A}_{k, 0} =  r_{k} \cdot c_0 = F_{k, 1} $.
 For $k > m > 1$ we have  $0 = \mathcal{A}_{k, m-1} = l_{k} \cdot r_{m-1}$. This yields 
 \[
\left(
  F_{k, 1}A_1^{-1}, F_{k, 2}S_1^{-1}, \cdots, 
  F_{k, m}S_{m-1}^{-1}
  ,\cdots, F_{k, k}S_{k-1}^{-1}, I, 0
\right) \cdot \left(
  F_{m-1, 1}^{\top}, - F_{m-1, 2}^{\top}, \cdots, -F_{m-1, m-1}^{\top}, -S_{m-1}, 0
\right) = 0
\] which implies \eqref{Fk}. 
Finally, we see that $-A_k = \mathcal{A}_{k, k} = r_k \cdot c_k$. This yields
 \[\left(
  F_{k, 1}A_1^{-1}, F_{k, 2}S_1^{-1}, \cdots, 
   F_{k, k}S_{k-1}^{-1}, I, 0
\right) \cdot \left(
  F_{k, 1}^{\top}, - F_{k, 2}^{\top}, \cdots, -F_{k, k}^{\top}, -S_{k}, 0
\right) = -A_k \] which gives \eqref{Sk}.
\end{proof}
The recursive nature of the Schur complements just defined makes them unpractical in realistic applications, 
even for $N = 2$. We give below the explicit expressions for $S_1$, $S_2$ and $S_3$:
\begin{eqnarray}  S_1 & = & A_1 + B_1 A_0^{-1} B_1^\top \nonumber \\
\label{S2} S_2 &= &A_2 + B_2 A_0^{-1} B_2^\top  - (B_2 A_0^{-1} B_1^\top)(A_0 + B_1 A_0^{-1} B_1^\top)^{-1} (B_1 A_0^{-1} B_2^\top) \\
S_3 & = & A_3 + B_3 A_0^{-1} B_3^\top - B_3 A_0^{-1} B_1^\top S_1^{-1} B_1 A_0^{-1} B_3^\top - F_{22} S_2^{-1} F_{22}^\top,\nonumber \\
&& F_{22} = B_3 A_0^{-1} B_2^\top - B_3 A_0^{-1} B_1^\top S_1^{-1} B_1 A_0^{-1} B_2^\top. \nonumber
\end{eqnarray}
\noindent
We now state a results on the positive definiteness of the Schur complements, which generalizes \cite[Prop 1.]{BradleyGreif2026}
\begin{theorem}
\label{theo2}
    If $\mathcal A$ is invertible then all Schur complement are SPD.
\end{theorem}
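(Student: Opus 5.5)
We argue by induction on $k$, using the LDL factorization together with Sylvester's law of inertia, while keeping track of the leading principal blocks of $\mathcal A$. For $k=1,\dots,N$ let $\mathcal A_k$ denote the leading principal block submatrix of $\mathcal A$ made of block rows and columns $0,1,\dots,k$. It is again an arrow matrix with blocks $A_0, B_1,\dots,B_k, A_1,\dots,A_k$, and by construction
\[
\mathcal A_k=\mathcal L_k \,\text{blkdiag}(A_0,-S_1,\dots,-S_k)\,\mathcal L_k^\top ,
\]
where $\mathcal L_k$ is the leading block of $\mathcal L$. The Schur complements $S_1,\dots,S_k$ therefore depend only on $\mathcal A_k$. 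Symmetry of each $S_k$ is immediate from \eqref{Sk}, since $A_k$, $B_kA_0^{-1}B_k^\top$ and each term $F_{k,i}S_{i-1}^{-1}F_{k,i}^\top$ are symmetric, provided the earlier $S_{i-1}$ are symmetric and invertible. That is exactly the induction hypothesis.

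The key step is a variational characterization of $-S_k$. Write $\mathcal A_k$ as a $2\times 2$ block matrix with leading block $\mathcal A_{k-1}$, off-diagonal block $[B_k\ 0\ \cdots\ 0]$ and trailing block $-A_k$. By the factorization, $-S_k$ is the Schur complement of $\mathcal A_{k-1}$ in $\mathcal A_k$:
\[
S_k = A_k + [B_k\ 0\cdots 0]\,\mathcal A_{k-1}^{-1}\,[B_k\ 0\cdots 0]^\top = A_k + B_k\,(\mathcal A_{k-1}^{-1})_{00}\,B_k^\top .
\]
Here $(\mathcal A_{k-1}^{-1})_{00}$ is the $(0,0)$ block of the inverse. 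By the block inversion formula applied to $\mathcal A_{k-1}$ viewed as in \eqref{regsp}, this block equals
\[
\bigl(A_0+\mathbf B_{k-1}^\top \mathbf A_{+,k-1}^{-1}\mathbf B_{k-1}\bigr)^{-1}
\]
when the $A_i$ are invertible. In general it equals $A_0^{-1}-A_0^{-1}\mathbf B_{k-1}^\top \mathbf S^{-1}\mathbf B_{k-1}A_0^{-1}$ with $\mathbf S=\mathbf A_{+,k-1}+\mathbf B_{k-1}A_0^{-1}\mathbf B_{k-1}^\top$. The claim to prove is that this block is positive semidefinite, and in fact positive definite when $\mathbf A_{+,k-1}$ is invertible. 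A cleaner route is to set $P = A_0^{-1/2}\mathbf B_{k-1}^\top$. The block then becomes
\[
A_0^{-1/2}\bigl(I-P(\mathbf A_{+,k-1}+P^\top P)^{-1}P^\top\bigr)A_0^{-1/2},
\]
and $I-P(\mathbf A_+ +P^\top P)^{-1}P^\top \succeq I-P(P^\top P)^{+}P^\top\succeq 0$, because $\mathbf A_+\succeq 0$ (when the inverse exists). This gives $S_k\succeq 0$.

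Positive semidefiniteness is not enough; we still need invertibility, and this is where the hypothesis that $\mathcal A$ is invertible enters. This is the step I expect to be the main obstacle. I would handle it by inertia counting rather than by the matrix inequality. Suppose inductively that $A_0,S_1,\dots,S_{k-1}$ are SPD, so that $\mathcal A_{k-1}$ is invertible. Then the computation above shows $S_k\succeq 0$. It remains to rule out a singular $S_k$. Suppose $S_kv=0$ with $v\neq 0$. Then $v^\top A_kv=0$ and $v^\top B_k(\mathcal A_{k-1}^{-1})_{00}B_k^\top v=0$. From the first, $A_kv=0$ since $A_k\succeq 0$. From the second, using the projector form, $w:=A_0^{-1/2}B_k^\top v$ must lie in the range of $P$ and be annihilated suitably. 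This forces $B_k^\top v\in\text{range}(\mathbf B_{k-1}^\top)$ with a compatible kernel condition on $\mathbf A_{+,k-1}$. From these relations I would build a nonzero vector in $\ker\mathcal A_N$: put $v$ in block $k$, solve for blocks $0,\dots,k-1$ through $\mathcal A_{k-1}$, and fill the remaining blocks with zeros. This contradicts the invertibility of $\mathcal A$.

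Concretely, the contradiction vector should be $x=(-\mathcal A_{k-1}^{-1}[B_k^\top v;0],\,v,\,0,\dots,0)$. A direct check shows that the first $k$ block rows of $\mathcal A x$ vanish and that block row $k$ equals $-S_kv=0$. The remaining block rows are $B_j x_0$ for $j>k$, so the delicate point is to show $x_0=0$. This should follow from the positive semidefinite argument, since equality in the semidefinite estimate forces $(\mathcal A_{k-1}^{-1})_{00}B_k^\top v=0$. Hence $x\in\ker\mathcal A$ is nonzero, a contradiction, and therefore $S_k$ is SPD. Finally, $A_0$ is SPD by assumption, which closes the induction.
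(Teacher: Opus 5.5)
Your proposal is correct. Its first half is the paper's argument: the same induction on the leading blocks $\mathcal A_{k-1}$, the same identity $S_k = A_k + B_k(\mathcal A_{k-1}^{-1})_{00}B_k^\top$, and the same semidefiniteness step. Indeed, with $\mathbf S = \mathbf A_{+,k-1}+P^\top P$, your bound $P\mathbf S^{-1}P^\top \preceq P(P^\top P)^{+}P^\top$ is exactly the paper's $Z = I - WW^\top \succeq 0$ with $W = P\mathbf S^{-1/2}$ and $\rho(W^\top W)\le 1$.

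The difference is the definiteness step. The paper settles it in one sentence (invertibility of $\mathcal A$ plus Sylvester's law of inertia), whereas you exhibit a kernel vector. Your construction works as written, and the point you flag as delicate is immediate. Writing $H = (\mathcal A_{k-1}^{-1})_{00}\succeq 0$, the identity $0 = v^\top S_k v = v^\top A_k v + (B_k^\top v)^\top H\,(B_k^\top v)$ forces $HB_k^\top v = 0$, i.e.\ $x_0 = 0$. Hence the rows $B_jx_0$, $j>k$, vanish and $x\neq 0$ lies in $\ker\mathcal A$. The intermediate remarks about $\text{range}(\mathbf B_{k-1}^\top)$ are unnecessary and can be dropped.

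What your route buys is that it treats the intermediate $S_k$, $k<N$, directly. It also shows exactly where the arrow structure enters: blocks $1,\dots,N$ communicate only through block $0$. A bare inertia count on $\mathcal L\mathcal D\mathcal L^\top$ presupposes that $S_1,\dots,S_{N-1}$ are already invertible, so by itself it only yields the last Schur complement. To run the inertia argument for $k<N$, one should note that $\mathcal A$ is congruent to $\text{blkdiag}(\mathcal A_{k-1},-\mathbf T)$, where $\mathbf T = \text{blkdiag}(A_k,\dots,A_N) + \mathbf C H\mathbf C^\top\succeq 0$ and $\mathbf C = [B_k^\top\ \cdots\ B_N^\top]^\top$. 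Invertibility of $\mathcal A$ then makes $\mathbf T$ nonsingular, hence SPD, and $S_k$ is its leading principal block. That version is shorter once spelled out; yours is self-contained.
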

\begin{proof}
By induction. For $N=0$ this is trivially satisfied since
$\mathcal A \equiv A_0$, which is SPD by assumption.
Let the statement holds for $j < N$, which means that
all $S_i, i \le j$ are SPD. We define the matrix
\begin{equation}
    \label{SP}
 \mathcal  A_j = \begin{bmatrix} A_0 & \mathbf B_j^\top \\ \mathbf B_j & -\mathbf A_j\end{bmatrix}, \ \mathbf B_j = \begin{bmatrix} B_1^\top & \ldots & B_j^\top\end{bmatrix}^\top, \ \mathbf A_j = \text{blkdiag}(A_1, \ldots, A_j).\end{equation}
We also define $\mathbf S_j$ the Schur complement related to matrix $\mathcal A_j$, namely
$\mathbf S_j = \mathbf A_j + \mathbf B_j A_0^{-1} \mathbf B_j^\top$. Since by inductive hypothesis
all block Schur complements $S_1, \ldots, S_j$ are SPD,  $\mathcal A_j$ is invertible and
hence $\mathbf S_j $ is SPD.
We now write $\mathcal A_{j+1}$ in terms of $\mathcal A_j$ as
\[ \mathcal A_{j+1}  = 
 \begin{bmatrix} A_0 & \mathbf B_j^\top & B_{j+1}^\top\\ \mathbf B_j & -\mathbf A_j & \bf 0\\
 B_{j+1} & \mathbf 0 & -A_{j+1}\end{bmatrix} = 
\begin{bmatrix} \text{\Large $\mathcal A_j$} & \begin{array}{l} B_{j+1}^\top\\\mathbf 0 \end{array} \\ \begin{array}{ll} B_{j+1} & \mathbf 0 \end{array}  & -A_{j+1} \end{bmatrix}. \]
Then, by definition of the Schur complement, and exploiting the inverse of a saddle-point matrix, 
\begin{eqnarray*}  S_{j+1} &= &  A_{j+1} + \begin{bmatrix} B_{j+1} & \mathbf 0 \end{bmatrix} \mathcal A_j^{-1}  \begin{bmatrix}  B_{j+1}^\top \\\mathbf 0\end{bmatrix}= A_{j+1} + B_{j+1} (\mathcal A_j^{-1})_{11} B_{j+1}^\top\\
&=& A_{j+1}+
 B_{j+1} \left(A_0^{-1} - A_0^{-1} \mathbf B_j^\top \mathbf S_j^{-1} \mathbf B_j A_0^{-1}\right)
 B_{j+1}^\top = \\
 &=& A_{j+1}+
 B_{j+1} A_0^{-1/2} \underbrace{\left(I - A_0^{-1/2} \mathbf B_j^\top \mathbf S_j^{-1} \mathbf B_j A_0^{-1/2}\right)}_{\text{\large{$Z$}}}A_0^{-1/2}B_{j+1}^\top.
 \end{eqnarray*}
$Z$ is a symmetric orthogonal projector. $Z = I - W W^\top$, where $W = A_0^{-1/2} \mathbf B_j^\top \mathbf S_j^{-1/2}$. $W$ is a rectangular (skinny) matrix. The smallest eigenvalue
of $Z$ is $1 - \rho(W W^\top) = 1 - \rho(W^\top W)$.
Since \[W^\top W=    S_j^{-1/2}  \mathbf B_j A_0^{-1}
 \mathbf B_j^\top \mathbf  S_j^{-1/2} =
  \mathbf S_j^{-1/2}  (\mathbf S_j + A_j)\mathbf  S_j^{-1/2} = I -  \mathbf S_j^{-1/2}  A_j \mathbf S_j^{-1/2}, \]
  we have that $\rho(W W^\top) \le 1$ and therefore $Z$ is positive semidefinite,
  and so is $S_{j+1}$.
 The definiteness follows from invertibility of $\mathcal A$, together with the Sylvester inertia law.
 \begin{Remark}
     By standard linear algebra, the expressions of the generic Schur complement $S_k$ in Theorems
     \ref{theo1} and \ref{theo2} can be shown to coincide.
 \end{Remark}

\end{proof}

% \cite{SZ}, where bounds on the condition number of the preconditioned matrix have been developed, 
%using the exact $\mathcal P_D$ as the preconditioner,
%the eigenvalues of the preconditioned matrix are exactly computed, in the simplified case where $A_k$ are $n_k \times n_k$ 
%zero matrices for $k > 0$, and are related to the zeros of suitable Chebyshev--like polynomials. 
%In the recent paper \cite{bergamaschi2025eigenvalue} these results are extended to the case where all diagonal blocks $A_k$
%can be different from zero.

  \section{Block diagonal preconditioners}
  \label{sec:BDP}
The process of
developing efficient (Schur complement based) block diagonal preconditioners starts from 
defining ideal preconditioners and then approximating
the Schur complements to make them
applicable in realistic settings. 
\subsection{Optimal Schur complement preconditioners: Forward derivation}
Which is the optimal preconditioner for an arrow-shaped multiple saddle-point linear system? To answer this question we rewrite the linear system as a regular
saddle-point system \eqref{regsp}
    \[
     \mathcal  A = \begin{bmatrix} A_0 & \mathbf B^\top \\ \mathbf B & -\mathbf A_+\end{bmatrix}
      =\begin{bmatrix} I  & 0 \\ \bB A_0^{-1}  & I\end{bmatrix}   \
     \begin{bmatrix} A_0  & 0 \\ 0 & -\mathbf S_+^{\text{dense}}\end{bmatrix}   \
     \begin{bmatrix} I & A_0^{-1} \bB^\top \\ 0 & I\end{bmatrix}.   \
    \]
 The optimal block diagonal and
  triangular preconditioners read as
  \[ \mathcal P_D =\begin{bmatrix} A_0 & 0 \\  0 &\mathbf S_+^{\text{dense}}\end{bmatrix},\qquad \mathcal P_T = \begin{bmatrix} A_0 & \mathbf B^\top \\  0 &-\mathbf S_+^{\text{dense}}\end{bmatrix}, \] 
 \[\text{where} \qquad \mathbf S_+^{\text{dense}} = \mathbf A_+ + \mathbf B A_0^{-1} \mathbf B^\top,
 \qquad \text{with} \quad
 (\mathbf S_+^{\text{dense}})_{kl} = \delta_{kl} A_k + B_k A_0^{-1} B_l^\top. 
% = \begin{bmatrix} B_1 A_0^{-1} B_1^\top & \ldots &  B %A_0^{-1} B^\top\\ \ldots & \ldots & \ldots \\
 % B_1 A_0^{-1} B^\top & \ldots &  B A_0^{-1} B^\top %\end{bmatrix}
  \]
  is a full block matrix. The eigenvalues of $\mathcal P_D^{-1} \mathcal A$ are, as well known,
  in $[-1, -0.618] \cup  [1, 1.618]$, while those
  of $\mathcal P_T^{-1} \mathcal A$ are all ones.

  \noindent
  A truly block diagonal preconditioner must instead have the following form
  \[ \mathbf S_+ = \text{blkdiag}(S_1, \ldots, S_N).\]
  The full Schur complements given by \eqref{Sk} cannot be computed in realistic problems. Moreover, we will show in Section \ref{sec:eig} that this choice is also detrimental in terms of eigenvalue
  distribution and hence of the number  of MINRES iterations.
  
  To obtain a practical preconditioner, we will approximate the Schur complements by
  \begin{equation} \label{incomplete}  \widetilde{\mathbf S}_+ =
  \text{blkdiag}( A_k + B_k  A_0^{-1} B_k^\top).
   \end{equation}
   Notice that this choice corresponds (a) to neglect $\Delta_k$ in \eqref{Sk},
   or (b) to define $\mathbf {\widetilde {S}_+} = \text{blkdiag}(\mathbf S_+^{\text{dense}})$.

   %Then each of the individual Schur complements in $\widetilde{\mathcal S}_+$ should be in their turn 
%a%pproximated by an inexact counterpart. The final preconditioner
%reads
%\[ \widehat {\mathcal{P}}_D = \begin{bmatrix} \widehat A_0 & 0 \\ 0 & \text{blkdiag}(\widehat A_k + {B_k \widehat %A_0^{-1} B_k^\top})\end{bmatrix}\]
\subsection{Optimal Schur complement preconditioners: Backward derivation}
Assume that all $A_k, k \ge 0$ are SPD.  Considering again the block subdivision as in \eqref{regsp},
we can define an optimal block diagonal preconditioner as 
\[ \mathcal P_D^{\text{rev}} =\begin{bmatrix}  A_0 +\mathbf B^\top \mathbf  A_+^{-1} \mathbf  B &0 \\ 0 & \mathbf A_+  \end{bmatrix}. \]
We will denote this as \textit{reverse} Schur complement preconditioner as
named in \cite{FRANCESCHINI2022114632}.
The eigenvalues of the preconditioned matrix are, again, 
in $[-1, -0.618] \cup [1, 1.618]$ providing fast
MINRES convergence. 
The optimal triangular preconditioner is defined as 
\[ \mathcal P_T^{\text{rev}} =\begin{bmatrix}  A_0 +\mathbf B^\top \mathbf  A_+^{-1} \mathbf  B & \mathbf B^\top \\ 0& -\mathbf A_+    \end{bmatrix}, \]
yielding a preconditioned GMRES \cite{saad1986gmres} iteration converging in exactly 2 iterations.

These preconditioners
rest on the practical computation and inversion of the Schur complement
\[ \Sigma =  A_0 +\mathbf B^\top \mathbf  A_+^{-1} \mathbf  B = A_0 + \sum_{k=1}^N B_k^\top A_k^{-1} B_k. \]
Irrespective of the invertibility of the diagonal blocks, the following approximate block-diagonal preconditioner can be defined as
\[ \widehat {\mathcal P}_D^{\text{rev}} =
\begin{bmatrix}  
\widehat{A}_0 +\mathbf B^\top \widehat {\mathbf A}_+^{-1} \mathbf B &0 \\ 0 & \widehat{\mathbf A}_+  
\end{bmatrix}, \]
with $\widehat A_k$ SPD approximations to  $A_k$.

\section{Eigenvalues of Preconditioned Saddle-Point Systems}\label{sec:eig}

\subsection{Forward Schur complement preconditioner}
The preconditioned matrix reads 
\begin{equation}\label{Q}
    \begin{aligned}
    \mathcal{Q}&={\mathcal{P}}_D^{-1/2} \mathcal{A}{\mathcal{P}}_D^{-1/2}=\left[\begin{array}{ccccc}
        E_0 & \mathbf R^\top\\
        \mathbf R & -\mathbf E_+\\
    \end{array}\right],
    \qquad \mathbf R = \begin{bmatrix} R_1 \\ R_2 \\ \ldots \\ R_N\end{bmatrix}, \qquad
   \mathbf E_+ = \text{blkdiag}(E_1, \ldots, E_N) \\
    %&=\left[\begin{array}{ccccc}
     %   E_0 & R_1^T & 0 & & \\
      %% 0 & R_2 & \overline{S_2}-R_2R_2^T & \hspace{-12mm}\ddots \\
        % & \ddots & \ddots & R^T\\
        % & R&  R&\hspace{-8mm} (-1)^N(\overline{S_N}-RR^T)\\
    %\end{array}\right]
    \end{aligned}
\end{equation}
where
\[ R_k = \widehat{S}_{k}^{-1/2}B_k\widehat{A}_0^{-1/2}, \quad
   E_k = \widehat{S}_k^{-1/2}A_k\widehat{S}_k^{-1/2}, \ \ k = 0, \dots, N.
\]

Componentwise, we write the eigenvalue problem $\mathcal{Q}u=\lambda u$, with $u=\left[u_1^\top,u_{+}^\top\right]^\top$, as 

    \begin{eqnarray}
  \nonumber      (E_0-\lambda I)u_1  +  \mathbf R^Tu_+ &= & 0\\
    \mathbf R u_1 - (\lambda I + \mathbf E_+) u_{+} &= & 0
\label{eq1}
    \end{eqnarray}

The matrices $R_kR_k^T$ are all symmetric and positive definite. We define two indicators $\gamma_E^{(k)}$ and $\gamma_R^{(k)}$ using the Rayleigh quotient
	\begin{equation}
		\begin{array}{lll}
\alpha_E^{(i)} \equiv \lambda_{\min }\left(E_i\right), & \beta_E^{(i)} \equiv \lambda_{\max }\left(E_i\right), & \gamma_E^{(i)}\left(w\right)=\dfrac{w^{T} E_i w}{w^{T} w} \in\left[\alpha_{E}^{(i)}, \beta_{E}^{(i)}\right], \quad i=0,\dots,N \\[.6em]
\alpha_R^{(i)} \equiv \lambda_{\min }\left(R_i R_i^{T}\right), & \beta_R^{(i)} \equiv \lambda_{\max }\left(R_i R_i^{T}\right), & \gamma_R^{(i)}\left(w\right)=\dfrac{w^{T} R_i R_i^{T} w}{w^{T} w} \in\left[\alpha_{R}^{(i)}, \beta_{R}^{(i)}\right], \quad i=1,\dots,N.
\end{array}
		\label{gammaER}
	\end{equation}

Moreover we have 
\begin{eqnarray} 
\mathbf R^\top \mathbf R &=& \sum_{k=1}^N R_k   ^\top R_k, \qquad  \mathbf R^\top \mathbf R \in \mathbb  R^{n_0 \times n_0} \\
(\mathbf R \mathbf R^\top)_{kl} &=& R_k R_l^\top, \qquad \mathbf R \mathbf R^\top
\in \mathbb R^{n_1 + \ldots + n_N \times n_1 + \ldots + n_N}
\end{eqnarray}
We define as $\Gamma_R(w)$ and  $\Gamma_E(w)$ the Rayleigh quotients
of $\bR \bR^\top$ and
$\mathbf E_+$, respectively %matrices since it turns out that
\begin{eqnarray}
0 < {\boldsymbol \alpha}_R \le  \lambda_{\min}(\mathbf R \mathbf R^\top)
\le \Gamma_R(w)
&\le& \lambda_{\max}(\mathbf R \mathbf R^\top)
\le {\boldsymbol \beta}_R \equiv \sum_{k=1}^N \beta_R^{(k)}. \nonumber \\[-.8em] 
0\le \lambda_{\min}(\mathbf R^\top \mathbf R)
\le \sum_{k=1}^N \gamma_R^{(k)} 
&\le &\lambda_{\max}(\mathbf R^\top \mathbf R)
\le {\boldsymbol \beta}_R \\[.3em]
0 \le {\boldsymbol \alpha}_E = \min_{k\ge 1}\{\alpha_E^{(k)} \}\le  \Gamma_E(w) &\le& \max_{k \ge 1}\{\beta_E^{(k)}\} = \boldsymbol \beta_E. \nonumber
\end{eqnarray}
\begin{Remark}
\label{rem1} 
If the Schur complements in the preconditioner are as in \eqref{incomplete} and they are formed exactly,
    then the matrices $R_k$ and $E_k$ are related as described below:
    \[ R_k R_k^\top = \widehat{S}_{k}^{-1} B_k  A_0^{-1} B_k^\top \widehat S_k^{-1/2}=
    \widehat{S}_k^{-1/2}(\widehat S_k - A_k) \widehat{S}_k^{-1/2}
    =I - E_k. \]
  %
%Particular cases:
%    \begin{enumerate}
 %   \item If $k=1$ then $\Delta_1 = 0$, and $ R_k R_k^\top = I - E_k$. It follows that $\beta_R^{(1)} \le 1.$
%\item Let $k \ge 2$.    If $A_k = 0$ then $\gamma_R > 1$, which also implies that
 %   $\gamma_R + \gamma_E > 1$. 
  %% $\widehat S_k = \widehat A_k + B_k \widehat A_0^{-1} B_k^\top$
   % then  $R_k R_k^\top = I - E_k$, and $\gamma_R \le 1$.
   % \end{enumerate}
   % If the reverse Schur complement approach si considered then, for $k > 0$,
%\[ R_k R_k^\top = A_{k}^{-1} B_k  A_0^{-1} B_k^\top A_k^{-1/2}=
 %   \widehat{S}_k^{-1/2}(\widehat S_k - A_k) \widehat{S}_k^{-1/2}
 %5   =I - E_k. \]
\end{Remark}
\noindent
\subsection{Reverse Schur complement preconditioner}
The preconditioned matrix is $\mathcal Q$ as in \eqref{Q} with the following block definitions:
\begin{eqnarray*}
E_0 = \widehat{S}_0^{-1/2} A_0 \widehat{S}_0^{-1/2}, && \widehat{S}_0 = \widehat{A}_0 +{\bB}^\top \widehat{\mathbf{A}}_+^{-1} \mathbf  B, \quad E_k = \widehat {A}_k^{-1/2} A_k \widehat {A}_k^{-1/2}\qquad R_k = \widehat{A}_k^{-1/2} B_k\widehat{S}_0^{-1/2}, \quad \\
\end{eqnarray*}
We also have
\[ \bR^\top \bR = \sum_k R_k^\top R_k = \widehat S_0^{-1/2} \mathbf B^\top \mathbf  A_+^{-1} \mathbf  B S_0^{-1/2} =\widehat S_0^{-1/2} (\widehat S_0 - \widehat A_0)  \widehat S_0^{-1/2} =
I - \widehat S_0^{-1/2} \widehat A_0\widehat S_0^{-1/2}= I - E_0,\]
implying that
\[ 0 \le \Gamma_R \le 1-\alpha_E^{(0)} .\]

%\noindent
%\section{Eigenvalue analysis}
The analysis that follows refers to both the forward and the reverse Schur complement preconditioner, since the matrix $\mathcal Q$ looks like  the same for both approaches, while what changes is the definition of the single blocks.
We start mentioning a technical Lemma, which generalizes \cite[Lemma 1]{BergaNLAA10} whose proof can be found in \cite{Bergamaschi-COAP-2024}.
\begin{lemma}
\label{Lemma1}
    \textit{Let Y be a symmetric matrix valued function defined in $F\subset\mathbb{R}$ and}
    \[
        0\notin[\min\{\sigma(Y(\zeta))\},\max\{\sigma(Y(\zeta))\}]\quad \quad \textit{for all }\ \zeta\in F.
    \]
    \textit{Then, for arbitrary $s\neq0$, there exists a vector $v\neq 0$ such that}
    \[
       \frac{s^TY(\zeta)^{-1}s}{s^Ts}= \frac{1}{\gamma_Z} \quad\quad \textit{with }\ \gamma_Z = \frac{v^TY(\zeta)v}{v^Tv}.
    \]
\end{lemma}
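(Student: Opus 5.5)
Fix $\zeta\in F$ and write $Y=Y(\zeta)$. The hypothesis says that $0$ is not in the interval spanned by the extreme eigenvalues of $Y$. So every eigenvalue of $Y$ has the same strict sign: either $Y$ is positive definite or $Y$ is negative definite. Replacing $Y$ by $-Y$ changes the signs of both sides of the desired identity together, so we may assume $Y$ is SPD. In particular $Y$ is invertible and $Y^{-1}$ is SPD, so the quotient $q:=\dfrac{s^\top Y^{-1}s}{s^\top s}$ is well defined and strictly positive for every $s\neq 0$.

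The plan is to build $v$ explicitly. Take $v=Y^{-1/2}s$, or alternatively $v=Y^{-1}s$; both are nonzero since $s\neq0$. The choice $v=Y^{-1}s$ is cleanest: $v^\top Y v = s^\top Y^{-1}s$ and $v^\top v = s^\top Y^{-2}s$, which gives
\[
\frac{v^\top Y v}{v^\top v}=\frac{s^\top Y^{-1}s}{s^\top Y^{-2}s}.
\]
This is generally not $1/q$, so the direct choice does not work. Instead I would use a continuity (mean value) argument. Let $\lambda_{\min}$ and $\lambda_{\max}$ denote the extreme eigenvalues of $Y$. The quotient $q$ is a Rayleigh quotient of $Y^{-1}$, so $q\in[1/\lambda_{\max},1/\lambda_{\min}]$, and hence $1/q\in[\lambda_{\min},\lambda_{\max}]$. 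The Rayleigh quotient $v\mapsto v^\top Yv/v^\top v$ is continuous on the connected set $\mathbb R^n\setminus\{0\}$ (for $n\ge2$; for $n=1$ the claim is trivial). Its range is therefore an interval containing the values $\lambda_{\min}$ and $\lambda_{\max}$, which are attained at the corresponding eigenvectors. So its range is exactly $[\lambda_{\min},\lambda_{\max}]$, and there is a $v\neq0$ with $\gamma_Z=1/q$.

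A concrete $v$ can also be written down, which avoids the topological step. Take unit eigenvectors $x_{\min},x_{\max}$ with $x_{\min}\perp x_{\max}$ and set $v=\cos\theta\,x_{\min}+\sin\theta\,x_{\max}$. Then $\gamma_Z(\theta)=\lambda_{\min}\cos^2\theta+\lambda_{\max}\sin^2\theta$, and $\theta$ is chosen to hit $1/q$.

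In the negative definite case, apply the above to $-Y$: $q$ and $\gamma_Z$ both flip sign, so the identity persists.

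The only delicate point is the sign reduction, namely checking that the hypothesis genuinely excludes indefinite $Y$, so that $1/q$ lies in the numerical range. The rest is routine. Note that $v$ depends on $\zeta$ and $s$, which is all the statement requires.
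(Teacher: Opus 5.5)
Your proof is correct, but it takes a longer route than needed. The paper does not prove the lemma; it defers to earlier work. The usual argument is the one-line substitution you wrote down first and then dropped.

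In the positive definite case take $v=Y^{-1/2}s$. Then $v^\top Yv=s^\top s$ and $v^\top v=s^\top Y^{-1}s$, so $\gamma_Z=s^\top s/(s^\top Y^{-1}s)$ is exactly the reciprocal of the left-hand side. The negative definite case follows from your sign reduction, or from $v=(-Y)^{-1/2}s$. Your check of $v=Y^{-1}s$ correctly fails, but that was the wrong candidate, so your remark that ``the direct choice does not work'' does not apply to $Y^{-1/2}s$.

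Your actual argument is also valid. You show that $1/q\in[\lambda_{\min},\lambda_{\max}]$, and that the Rayleigh quotient of $Y$ attains every value in that interval. You get the second fact from connectedness of $\mathbb{R}^n\setminus\{0\}$ for $n\ge 2$, or explicitly along $\cos\theta\,x_{\min}+\sin\theta\,x_{\max}$.

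The two routes buy different things. The substitution gives a canonical $v$, linear in $s$, with no intermediate-value step. Your numerical-range argument avoids matrix square roots. It also isolates what the later proofs (Theorems \ref{theoremUpsilon} and \ref{theoremPI}) actually use: only that $1/q$ is some Rayleigh quotient of $Y$, i.e.\ that it lies in $[\lambda_{\min}(Y),\lambda_{\max}(Y)]$.
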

\noindent

\noindent
\begin{theorem}
\label{theoremUpsilon}
Let ${\mathcal P}_D$ a Schur complement block diagonal preconditioner. 
Then the eigenvalues of 
${\mathcal P}_D^{-1} \mathcal A$ are located in
$[\lambda_-^{LB}, \lambda_-^{UB}]\ \cup
[\lambda_+^{LB}, \lambda_+^{UB}]
$, where
\begin{eqnarray}
    \lambda_-^{LB} & = &\frac{\alpha_E^{(0)} - \boldsymbol \beta_E
- \sqrt{(\alpha_E^{(0)} + \boldsymbol \beta_E)^2 + 4  \boldsymbol \beta_R}}{2}\\
    \lambda_-^{UB} & =& \frac{\beta_E^{(0)} - \boldsymbol \alpha_E
- \sqrt{(\beta_E^{(0)} + \boldsymbol \alpha_E)^2 + 4  \boldsymbol \alpha_R}}{2}\\
    \lambda_+^{LB} & = & \alpha_E^{(0)} \\
    \lambda_+^{UB} & = &
\frac{\beta_E^{(0)} - \boldsymbol \alpha_E
+ \sqrt{(\beta_E^{(0)} + \boldsymbol \alpha_E)^2 + 4  \boldsymbol \beta_R}}{2} 
\end{eqnarray}
\end{theorem}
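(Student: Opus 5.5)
Since $\mathcal P_D^{-1}\mathcal A$ is similar to $\mathcal Q$ in \eqref{Q}, I will work with the symmetric eigenproblem \eqref{eq1} and eliminate one of the two unknowns, choosing which one by the sign of $\lambda$.

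\emph{Positive eigenvalues.} For $\lambda>0$ the matrix $\lambda I+\mathbf E_+$ is SPD, because $\mathbf E_+\succeq 0$. The second equation of \eqref{eq1} gives $u_+=(\lambda I+\mathbf E_+)^{-1}\mathbf R u_1$. Here $u_1\neq 0$, since otherwise $\mathbf R^\top u_+ = 0$ and $(\lambda I+\mathbf E_+)u_+=0$ force $u=0$. Substituting into the first equation and multiplying by $u_1^\top/u_1^\top u_1$ yields
\[
\gamma_E^{(0)}(u_1)-\lambda+\frac{u_1^\top \mathbf R^\top(\lambda I+\mathbf E_+)^{-1}\mathbf R u_1}{u_1^\top u_1}=0 .
\]
The last term is nonnegative, so $\lambda\ge \gamma_E^{(0)}\ge\alpha_E^{(0)}$, which gives $\lambda_+^{LB}$.

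For the upper bound, set $s=\mathbf R u_1$. If $s=0$, then $\lambda=\gamma_E^{(0)}\le\beta_E^{(0)}$, which is below $\lambda_+^{UB}$. Otherwise, Lemma~\ref{Lemma1} applied to $Y=\lambda I+\mathbf E_+$ produces a Rayleigh quotient $\Gamma_E\in[\boldsymbol\alpha_E,\boldsymbol\beta_E]$ with
\[
\frac{s^\top Y^{-1}s}{s^\top s}=\frac1{\lambda+\Gamma_E}.
\]
Writing $\frac{s^\top s}{u_1^\top u_1}=\gamma_{RR}\in[0,\boldsymbol\beta_R]$ (a Rayleigh quotient of $\mathbf R^\top\mathbf R$), we obtain the quadratic
\[
\lambda^2-(\gamma_E^{(0)}-\Gamma_E)\lambda-(\gamma_E^{(0)}\Gamma_E+\gamma_{RR})=0 .
\]
Its positive root is
\[
\lambda=\tfrac12\Bigl(\gamma_E^{(0)}-\Gamma_E+\sqrt{(\gamma_E^{(0)}+\Gamma_E)^2+4\gamma_{RR}}\Bigr).
\]
Viewed as a function of $(\gamma_E^{(0)},\Gamma_E,\gamma_{RR})$, this root is increasing in $\gamma_E^{(0)}$ and in $\gamma_{RR}$. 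It is decreasing in $\Gamma_E$, since its derivative in $\Gamma_E$ is $\tfrac12(-1+(\gamma_E^{(0)}+\Gamma_E)/\sqrt{\cdot})\le0$. Taking the extreme values $\beta_E^{(0)}$, $\boldsymbol\alpha_E$, $\boldsymbol\beta_R$ gives $\lambda_+^{UB}$.

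\emph{Negative eigenvalues.} For $\lambda<0$ I eliminate the other way. Since $\lambda<0\le\alpha_E^{(0)}$, the matrix $E_0-\lambda I$ is SPD, so $u_1=-(E_0-\lambda I)^{-1}\mathbf R^\top u_+$ with $u_+\ne0$. Substituting into the second equation and taking the Rayleigh quotient with $u_+$ gives
\[
-\frac{u_+^\top\mathbf R(E_0-\lambda I)^{-1}\mathbf R^\top u_+}{u_+^\top u_+}-\lambda-\Gamma_E(u_+)=0 .
\]
Applying Lemma~\ref{Lemma1} again to $Y=E_0-\lambda I$ with $s=\mathbf R^\top u_+$, and writing $\frac{s^\top s}{u_+^\top u_+}=\Gamma_R\in[\boldsymbol\alpha_R,\boldsymbol\beta_R]$, produces the same quadratic
\[
\lambda^2-(\gamma_0-\Gamma_E)\lambda-(\gamma_0\Gamma_E+\Gamma_R)=0,\qquad \gamma_0\in[\alpha_E^{(0)},\beta_E^{(0)}],
\]
now with $\Gamma_R$ bounded below by $\boldsymbol\alpha_R>0$, because $\bR\bR^\top$ is positive definite. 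The negative root is
\[
\lambda=\tfrac12\Bigl(\gamma_0-\Gamma_E-\sqrt{(\gamma_0+\Gamma_E)^2+4\Gamma_R}\Bigr).
\]
Its derivative in $\gamma_0$ is $\tfrac12(1-(\gamma_0+\Gamma_E)/\sqrt{\cdot})\ge0$, it is decreasing in $\Gamma_E$, and it is decreasing in $\Gamma_R$. Therefore the minimum is attained at $(\alpha_E^{(0)},\boldsymbol\beta_E,\boldsymbol\beta_R)$, giving $\lambda_-^{LB}$, and the maximum at $(\beta_E^{(0)},\boldsymbol\alpha_E,\boldsymbol\alpha_R)$, giving $\lambda_-^{UB}$.

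\emph{Main obstacle.} The only delicate point is checking the hypothesis of Lemma~\ref{Lemma1}, namely that the spectrum of $Y$ stays on one side of zero for the relevant range of $\lambda$. This is exactly why the elimination direction is chosen by the sign of $\lambda$. I must also check the monotonicity signs of the roots carefully, and note that $\lambda=0$ is excluded because $\mathcal A$ is invertible.
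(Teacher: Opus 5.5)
Your proof is correct. For $\lambda<0$ it is essentially the paper's computation: eliminate $u_1$, apply Lemma~\ref{Lemma1} to $\lambda I-E_0$ (up to sign), and take the negative root of $\lambda^2-(\gamma_E^{(0)}-\Gamma_E)\lambda-(\gamma_E^{(0)}\Gamma_E+\Gamma_R)=0$. For $\lambda>0$, however, you take a different route.

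The paper performs the $u_1$-elimination for every $\lambda\notin[\alpha_E^{(0)},\beta_E^{(0)}]$. Positive eigenvalues then appear as the root $\lambda_+$ of the same quadratic, the lower bound comes from $\lambda_+\ge\gamma_E^{(0)}\ge\alpha_E^{(0)}$, and eigenvalues inside $[\alpha_E^{(0)},\beta_E^{(0)}]$ are implicitly covered because that interval lies in $[\lambda_+^{LB},\lambda_+^{UB}]$.

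You instead eliminate $u_+$ for $\lambda>0$, which is the elimination the paper uses later for Theorem~\ref{theoremPI}. This buys you three things:
\begin{itemize}
\item $\lambda I+\mathbf E_+$ is definite, so no interval has to be excluded.
\item $\lambda\ge\gamma_E^{(0)}$ follows in one line from the nonnegativity of $u_1^\top\mathbf R^\top(\lambda I+\mathbf E_+)^{-1}\mathbf R u_1$.
\item The coupling parameter becomes a Rayleigh quotient of $\mathbf R^\top\mathbf R$ in $[0,\boldsymbol\beta_R]$ rather than of $\bR\bR^\top$. This is harmless, since only the upper end $\boldsymbol\beta_R$ enters $\lambda_+^{UB}$.
\end{itemize}
The price is a second elimination and the extra case $\mathbf R u_1=0$. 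You handle that case correctly via $\beta_E^{(0)}\le\lambda_+^{UB}$, which does hold since the square root is at least $\beta_E^{(0)}+\boldsymbol\alpha_E$.

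The paper's single elimination is more economical. Your sign-based split makes the hypothesis of Lemma~\ref{Lemma1} automatic in each case. Your explicit derivative computations also verify the monotonicity of the roots, which the paper only asserts.
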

\begin{proof}
	We consider the eigenvalue problem \eqref{eq1}.
If $\lambda \not \in [\alpha_E^{(0)}, \beta_E^{(0)}]$ then
\[ u_1 = (\lambda I - E_0)^{-1} \mathbf R^\top u_+,\]
and
\[ \left(\mathbf R (\lambda I - E_0)^{-1} \mathbf R^\top - \lambda \mathbf I - \mathbf E_+\right)u_+ = 0.\]
Premultiplying by $-\dfrac{u_+^T}{u_+^\top u_+}$ and applying Lemma \ref{Lemma1}, yield, after defining $s = \mathbf R^\top u_+$:	 
\begin{equation} 
0 = \lambda +\frac{u_+^\top \mathbf E_+ u_+}{u_+^\top u_+} - \frac{1}{\lambda - \dfrac{v^\top E_0 v}{v^\top v}} \frac{u_+^\top \mathbf R \mathbf R^\top u_+}{u_+^\top u_+} = \lambda + \Gamma_E -\frac{\Gamma_R}{\lambda - \gamma_E^{(0)}}, \qquad \text{or, equivalently,}
\end{equation}
\[ \lambda^2 + (\Gamma_E - \gamma_E^{(0)})\lambda - \gamma_E^{(0)}\Gamma_E - \Gamma_R=0,\]
and therefore any eigenvalue of $\mathcal P_D^{-1} \mathcal A$ not lying in $[\alpha_E^{(0)}, \beta_E^{(0)}]$ can be described as
\[ \lambda_{\pm}
= 
\frac{\gamma_E^{(0)} - \boldsymbol \gamma_E
\pm \sqrt{(\gamma_E^{(0)} + \boldsymbol \gamma_E)^2 + 4  \boldsymbol \gamma_R}}{2}.
\]
Observing that 
\begin{itemize} 
\item $\lambda_-$ is decreasing in $\boldsymbol \gamma_E, \boldsymbol \gamma_R$ increasing in $\gamma_E^{(0)}$ 
\item $\lambda_+$ is increasing in $\gamma_E^{(0)}, \boldsymbol \gamma_R$, decreasing in $\boldsymbol \gamma_E$ 
\end{itemize}
and that $\lambda_+ \ge \gamma_E^{(0)} \ge \alpha_E^{(0)}$, the thesis follows
\end{proof}

\noindent
The previous spectral analysis is relatively simple. In particular situations, namely for large $N$, it can however lead to loose bounds since they depend on 
the global indicators $\boldsymbol \alpha_E, \boldsymbol \beta_E, \boldsymbol \alpha_R, \boldsymbol \beta_R$. Some bounds can be refined, by considering instead the single 
indicators $ \alpha_E^{(j)},  \beta_E^{(j)},  \alpha_R^{(j)},  \beta_R^{(j)}$, as stated in the following result.

%Let now $\mathcal I_E^- = \bigcup_{k=1}^{N} I_{E_{k}}$, then
\noindent
We define the function $\Pi: (-\infty, - \boldsymbol \beta_E) \cup (-\boldsymbol \alpha_E, + \infty) \to \mathbb R, \qquad  
\displaystyle\Pi(\lambda) \equiv \lambda - \gamma_E^{(0)} - \sum_{k=1}^{N}\frac{\gamma_R^{(k)}}{\lambda +\gamma_E^{(k)}}.$
\begin{theorem}\label{theoremPI}
    The eigenvalues of $\mathcal{P}_D^{-1}\mathcal{A}$ are located in $\displaystyle  [\mu_-^{LB}, -\boldsymbol \alpha_E] 
    \cup [\alpha_E^{(0)}, \mu_+^{UB}] $, where
    \begin{itemize}
    \item $\mu_-^{LB}$ is the negative zero of $\Pi$ with parameters: $\gamma_E^{(0)}=\alpha_E^{(0)}; \gamma_R^{(j)} =\beta_R^{(j)}, \gamma_E^{(j)} = \beta_E^{(j)}, j>0$. 
% \item $\mu_+^{LB}$ is the positive zero of $\Pi$ with parameters: $\gamma_E^{(0)}=\alpha_E^{(0)}; \gamma_R^{(j)}=0, \gamma_E^{(j)} =\beta_E^{(j)}, j>0$. 
    \item $\mu_+^{UB}$ is the positive zero of $\Pi$ with parameters: $\gamma_E^{(0)}=\beta_E^{(0)}; \gamma_R^{(j)}=\beta_R^{(j)}, \gamma_E^{(j)} =\alpha_E^{(j)}, j>0$. 
    \end{itemize}
\end{theorem}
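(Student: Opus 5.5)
We eliminate $u_+$ instead of $u_1$, which is the reverse of the proof of Theorem \ref{theoremUpsilon}, so that each block $k$ keeps its own indicators. Write $u_+=[u^{(1)\top},\dots,u^{(N)\top}]^\top$. The second block row of \eqref{eq1} reads $R_k u_1-(\lambda I+E_k)u^{(k)}=0$ for $k=1,\dots,N$.

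\emph{Step 1: eigenvalues that need no elimination.} If $\lambda\in[-\boldsymbol\beta_E,-\boldsymbol\alpha_E]$, it already lies in the claimed set, provided $\mu_-^{LB}\le-\boldsymbol\beta_E$. This holds because the negative zero of $\Pi$ is the one on $(-\infty,-\boldsymbol\beta_E)$; it exists there since $\Pi\to-\infty$ as $\lambda\to-\infty$ and $\Pi\to+\infty$ as $\lambda\uparrow-\boldsymbol\beta_E$. We may therefore assume $\lambda\notin[-\boldsymbol\beta_E,-\boldsymbol\alpha_E]$. Then $\lambda I+E_k$ is definite, either positive or negative, for every $k\ge1$, and $u^{(k)}=(\lambda I+E_k)^{-1}R_k u_1$. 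In particular $u_1\neq0$, since otherwise $u_+=0$ as well.

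\emph{Step 2: reduction to $\Pi(\lambda)=0$.} Substituting into the first row gives
\[ (E_0-\lambda I)u_1+\sum_{k=1}^N R_k^\top(\lambda I+E_k)^{-1}R_k u_1=0 .\]
We premultiply by $u_1^\top/u_1^\top u_1$ and set $s_k=R_k u_1$. For each $k$ with $s_k\neq0$ we apply Lemma \ref{Lemma1} with $Y=\lambda I+E_k$. This is legitimate because $0\notin[\lambda+\alpha_E^{(k)},\lambda+\beta_E^{(k)}]$. The lemma gives
\[ \frac{s_k^\top(\lambda I+E_k)^{-1}s_k}{s_k^\top s_k}=\frac{1}{\lambda+\gamma_E^{(k)}}, \qquad \gamma_E^{(k)}\in[\alpha_E^{(k)},\beta_E^{(k)}].\]
The remaining factor $s_k^\top s_k/u_1^\top u_1$ is a Rayleigh quotient of $R_k^\top R_k$. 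Since $R_k^\top R_k$ and $R_kR_k^\top$ share their nonzero eigenvalues, this factor lies in $[0,\beta_R^{(k)}]$; it equals $0$ exactly when $s_k=0$. Hence every such eigenvalue satisfies $\Pi(\lambda)=0$ for parameters with $\gamma_E^{(0)}\in[\alpha_E^{(0)},\beta_E^{(0)}]$, $\gamma_E^{(k)}\in[\alpha_E^{(k)},\beta_E^{(k)}]$ and $\gamma_R^{(k)}\in[0,\beta_R^{(k)}]$.

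\emph{Step 3: bounds by monotonicity.} On each of its two branches, $\Pi$ is strictly increasing in $\lambda$, because $\Pi'(\lambda)=1+\sum_k\gamma_R^{(k)}/(\lambda+\gamma_E^{(k)})^2>0$.

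\begin{itemize}
\item For $\lambda\in(-\boldsymbol\alpha_E,0)$ we have $\Pi(\lambda)\le\lambda-\gamma_E^{(0)}<0$. So there are no eigenvalues there.
\item For $\lambda>0$, the relation $\lambda-\gamma_E^{(0)}=\sum_k\gamma_R^{(k)}/(\lambda+\gamma_E^{(k)})\ge0$ gives $\lambda\ge\gamma_E^{(0)}\ge\alpha_E^{(0)}$.
\item Still for $\lambda>0$, $\Pi$ decreases in $\gamma_E^{(0)}$ and in each $\gamma_R^{(k)}$, and increases in each $\gamma_E^{(k)}$. Denote by $\Pi^+$ the function with the extremal parameters of the statement. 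Then $\Pi^+(\lambda)\le\Pi(\lambda)=0$, and since $\Pi^+$ is increasing, $\lambda\le\mu_+^{UB}$.
\item For $\lambda<-\boldsymbol\beta_E$ all denominators $\lambda+\gamma_E^{(k)}$ are negative. Now $\Pi$ decreases in $\gamma_E^{(0)}$, increases in $\gamma_R^{(k)}$, and increases in $\gamma_E^{(k)}$, since a negative denominator moving toward $0$ makes $-\gamma_R^{(k)}/(\lambda+\gamma_E^{(k)})$ larger. Denote by $\Pi^-$ the function with $\gamma_E^{(0)}=\alpha_E^{(0)}$, $\gamma_R^{(j)}=\beta_R^{(j)}$, $\gamma_E^{(j)}=\beta_E^{(j)}$. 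Then $\Pi^-(\lambda)\ge0=\Pi(\lambda)$, and monotonicity in $\lambda$ yields $\lambda\ge\mu_-^{LB}$.
\end{itemize}

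The step needing most care is the domain bookkeeping for the negative branch. The extremal function $\Pi^-$ has its rightmost pole at $-\max_j\beta_E^{(j)}=-\boldsymbol\beta_E$, so its branch $(-\infty,-\boldsymbol\beta_E)$ must coincide with the range of eigenvalues being bounded; it does. The other point to check is that Lemma \ref{Lemma1} is applied only where each $\lambda I+E_k$ is definite, which Step 1 guarantees. Eigenvalues lying between individual spectra $-\sigma(E_k)$ fall inside $[-\boldsymbol\beta_E,-\boldsymbol\alpha_E]$ and are covered by the statement.
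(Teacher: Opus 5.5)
Your proof is correct and follows the paper's route: you eliminate $u_+$, apply Lemma~\ref{Lemma1} to each definite block $\lambda I+E_k$ to obtain $\Pi(\lambda)=0$, and then combine the fact that $\Pi$ is increasing in $\lambda$ with its monotonicity in the parameters. The only difference is the last step, where you replace the paper's implicit-function-theorem sign table by a direct comparison of $\Pi^{\pm}(\lambda)$ with $\Pi(\lambda)=0$ at the eigenvalue; this is slightly cleaner, since it needs no tracking of roots along parameter paths. For completeness, when $\boldsymbol\alpha_E>0$ you should also exclude $\lambda=0$, which follows from the same bound as in your first bullet: $\Pi(0)\le-\gamma_E^{(0)}<0$.
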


\begin{proof}
Let $\lambda \not \in [-\boldsymbol \beta_E, -\boldsymbol \alpha_E]$, then $E_j + \lambda I$
	is definite and hence invertible. Considering \eqref{eq1}, we can write
\[ u_+ = (\lambda \mathbf I + \mathbf E_+)^{-1} \mathbf R^\top u_1. \]
Upon substitution, the first equation reads
\[(  E_0 - \lambda I + \mathbf R^\top(\lambda \mathbf I + \mathbf E_+)^{-1} \mathbf R)u_1  = 0 .\]
Premultiplying by $-\dfrac{u_1^T}{u_1^T u_1}$, defining $s_k = R_k u_1$, and applying Lemma \ref{Lemma1} to $\dfrac{s_k^\top (\lambda I + E_k)^{-1} s_k}{s_k^\top s_k}$, yield:	 
\begin{equation} \label{Pidef} 0 =  -\frac{u_1^\top E_0 u_1}{u_1^\top u_1} + \lambda  - \sum_{k=1}^N \frac{u_1 R_k^\top R_k u_1}{u_1^\top u_1} \frac{1}{\lambda + \dfrac{v_k ^\top E_k v_k}{v_k^\top v_k}} 
= -\gamma_E^{(0)} + \lambda -\sum_{k=1}^{N}\frac{\gamma_R^{(k)}}{\lambda +\gamma_E^{(k)}} \equiv \Pi(\lambda),\end{equation}
with $\Pi: (-\infty, -\boldsymbol \beta_E) \cup (-\boldsymbol \alpha_E, + \infty) \to \mathbb R$.
Since $\displaystyle
\Pi'(\lambda) = 1 +
 \sum_{k=1}^{N}\frac{\gamma_R^{(k)}}{(\lambda +\gamma_E^{(k)})^2}>0$, the function $\Pi(\lambda)$ is increasing in every interval in which it is defined.
 \[ \text{Since also} \quad \lim_{\lambda \to -\infty} \Pi(\lambda) = -\infty\ \
  \lim_{\lambda \to -{\boldsymbol \beta}_E^-} \Pi(\lambda) = +\infty\qquad
   \lim_{\lambda \to -{\boldsymbol \alpha}_E^+} \Pi(\lambda) = -\infty\ \
  \lim_{\lambda \to +\infty} \Pi(\lambda) = +\infty,\]
$\Pi$ has exactly two zeros, separated by the endpoints of the intervals of its definition. 
If $\boldsymbol \alpha_E> 0$, then $-\boldsymbol \alpha_E$ gives a bound for 
the largest negative eigenvalue. In case $\boldsymbol \alpha_E = 0$ the bound is $0$ and hence is not descriptive.
%\textcolor{red} {Since $\Pi(\gamma_E^{(0)}) < 0$, the present lower bound for the positive eigenvalues will improve that of Theorem \ref{thm:bound1}}.
%\subsection{How extremal zeros move with parameters.}
Easy computation shows
\begin{eqnarray}
    \frac{\partial \Pi}{\partial \gamma_E^{(j)}} &=& \begin{cases}
    -1 & \text{if} \ j =0\\  \dfrac{\gamma_R^{(j)}}{(\lambda + \gamma_E^{(j)})^2} & \text{if} \ j >0 \end{cases}
    \quad \longrightarrow \ \textnormal{sgn}\left( \frac{\partial \Pi}{\partial \gamma_E^{(j)}}\right) = 
    \begin{cases}
    -1 & \text{if} \ j =0\\  +1 & \text{if} \ j >0 \end{cases}
    \\
    \frac{\partial \Pi}{\partial \gamma_R^{(j)}} &=&
    -\frac{1}{\lambda + \gamma_E^{(j)}}.
\end{eqnarray}
By the implicit function theorem \cite[Eq. (17)]{pilotto-et-al2026}, since the function $\Pi(\lambda)$ is locally increasing, the monotonicity of its zeros as a function of the parameters reverts that of the polynomial.
The following table summarizes the monotonicities of
the three bounding roots, with respect to the parameters.

\begin{tabular}{lccc||ccc}
& $\dfrac{\partial \lambda}{\partial \gamma_R^{(j)}}(\xi) $
& $\dfrac{\partial \lambda}{\partial \gamma_E^{(0)}}(\xi) $
& $\dfrac{\partial \lambda}{\partial \gamma_E^{(j)}}(\xi)$ & $\gamma_R^{(j)}$ & $\gamma_E^{(0)} $ &
$\gamma_E^{(j)}, j > 0 $ \\[1.2em] \hline
 Smallest negative root $\xi$&  $-$ &  $+$ & $-$ & $\beta_R^{(j)}$&$\alpha_E^{(0)}$ &  $\beta_E^{(j)}$\\
 %Largest negative root $\xi$&  ${\color{blue} - }$ &  $+$ & $-$&
 %${\color{blue}\alpha_R^*}, \beta_E^{(0)}; \  %\alpha_E^{(j)}, j > 0$\\
Smallest positive root $\xi$&  $+$ &  $+$ & $-$&0&$\alpha_E^{(0)}$ &  $\beta_E^{(j)}$\\
 Largest positive root $\xi$&  $+$ &  $+$ & $-$&
 $\beta_R^{(j)}$&$\beta_E^{(0)}$&$\alpha_E^{(j)} $
 \\ 
\end{tabular}

\noindent
The thesis follows, by observing that $\Pi(\xi)$ for
$\gamma_R^{(j)} = 0, \gamma_E^{(0)} = \alpha_E^{(0)}$ simplifies to $\xi - \alpha_E^{(0)}$.
\end{proof}
\begin{Remark}
Let $N=2$. If we write  \[\Pi(\lambda) = \frac{\Pi_{\text{num}}(\lambda)}
{\Pi_{\text{den}}(\lambda)},\] with the parameters corresponding to
the smallest negative root, 
then $\Pi_{\text{num}}(\lambda)$ is the same as \cite[(23a)]{BradleyGreif2026}. If the parameters are those for the largest positive root, we get (23b) in the same paper.
\end{Remark}
\noindent
The results in Theorems \ref{theoremUpsilon} and \ref{theoremPI} are combined in the 
following Corollary.
\begin{corollary}
\label{corollary}
    The eigenvalues of the preconditioned multiple saddle-point matrix are in the following intervals:
\[[\mu_-^{LB}, \lambda_-^{LB}] 
    \cup [\alpha_E^{(0)}, \mu_+^{UB}].\]
\end{corollary}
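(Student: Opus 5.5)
The plan is to intersect, for each sign, the intervals given by Theorem \ref{theoremUpsilon} and Theorem \ref{theoremPI}, and then to show which endpoint is sharper in each case. The positive eigenvalues lie in $[\alpha_E^{(0)},\lambda_+^{UB}]\cap[\alpha_E^{(0)},\mu_+^{UB}]$, and the negative ones in $[\lambda_-^{LB},\lambda_-^{UB}]\cap[\mu_-^{LB},-\boldsymbol\alpha_E]$.

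\textbf{The right endpoint of the negative interval.} I read it as $\lambda_-^{UB}$; as printed, $\lambda_-^{LB}$ appears to be a typo. The check is immediate: since $\sqrt{(\beta_E^{(0)}+\boldsymbol\alpha_E)^2+4\boldsymbol\alpha_R}\ge \beta_E^{(0)}+\boldsymbol\alpha_E$, we get $\lambda_-^{UB}\le -\boldsymbol\alpha_E$. So the sharper upper end is the one from Theorem \ref{theoremUpsilon}.

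\textbf{$\mu_-^{LB}\ge\lambda_-^{LB}$.} Let $\Pi$ carry the parameters defining $\mu_-^{LB}$. Set $\lambda=\lambda_-^{LB}$, which is $<-\boldsymbol\beta_E$.
\begin{itemize}
\item Each denominator satisfies $\lambda+\beta_E^{(j)}\le\lambda+\boldsymbol\beta_E<0$.
\item Hence $-\beta_R^{(j)}/(\lambda+\beta_E^{(j)})\le -\beta_R^{(j)}/(\lambda+\boldsymbol\beta_E)$.
\item Summing over $j$ gives $\Pi(\lambda)\le \lambda-\alpha_E^{(0)}-\boldsymbol\beta_R/(\lambda+\boldsymbol\beta_E)$.
\end{itemize}
By definition, $\lambda_-^{LB}$ is the negative root of $\lambda^2+(\boldsymbol\beta_E-\alpha_E^{(0)})\lambda-\alpha_E^{(0)}\boldsymbol\beta_E-\boldsymbol\beta_R=0$. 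This is equivalent to $(\lambda-\alpha_E^{(0)})(\lambda+\boldsymbol\beta_E)=\boldsymbol\beta_R$, so the right-hand side above vanishes and $\Pi(\lambda_-^{LB})\le 0$. On $(-\infty,-\boldsymbol\beta_E)$, $\Pi$ is increasing with a unique zero $\mu_-^{LB}$, as shown in the proof of Theorem \ref{theoremPI}. Therefore $\mu_-^{LB}\ge\lambda_-^{LB}$.

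\textbf{$\mu_+^{UB}\le\lambda_+^{UB}$.} The argument is symmetric. At $\lambda=\lambda_+^{UB}>0\ge-\boldsymbol\alpha_E$, with the parameters defining $\mu_+^{UB}$, we have $\lambda+\alpha_E^{(j)}\ge\lambda+\boldsymbol\alpha_E>0$. This gives $\Pi(\lambda)\ge \lambda-\beta_E^{(0)}-\boldsymbol\beta_R/(\lambda+\boldsymbol\alpha_E)$. The right-hand side is zero, because $\lambda_+^{UB}$ solves $(\lambda-\beta_E^{(0)})(\lambda+\boldsymbol\alpha_E)=\boldsymbol\beta_R$. By monotonicity of $\Pi$ on $(-\boldsymbol\alpha_E,\infty)$, $\mu_+^{UB}\le\lambda_+^{UB}$. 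Both theorems give the same lower end $\alpha_E^{(0)}$ for the positive eigenvalues.

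The only real obstacle is the sign bookkeeping in the termwise comparison. One must use a single common denominator for every $j$ and keep track of the sign of $\lambda+\gamma_E^{(j)}$ on each branch of the domain of $\Pi$. Everything else reduces to the quadratics already appearing in Theorem \ref{theoremUpsilon}.
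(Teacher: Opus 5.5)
Your proof is correct and follows essentially the same route as the paper's. It uses the same termwise comparison of $\Pi$, with common denominator $\lambda+\boldsymbol\beta_E$ or $\lambda+\boldsymbol\alpha_E$, against the quadratics defining $\lambda_-^{LB}$ and $\lambda_+^{UB}$, followed by monotonicity of $\Pi$ on its outer branches. For the right end of the negative interval, the paper evaluates the quadratic with the parameters of $\lambda_-^{UB}$ at $-\boldsymbol\alpha_E$ instead of reading the bound off the closed form, which confirms your reading of $\lambda_-^{LB}$ there as a typo for $\lambda_-^{UB}$.
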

\begin{proof}
    To show that  $\lambda_-^{LB} < -\boldsymbol \alpha_E$, we evaluate $\Upsilon$ with parameters $\boldsymbol \alpha_R, \beta_E^{(0)}, \boldsymbol \alpha_E$ on $\lambda  = -\boldsymbol \alpha_E$ to get $\Upsilon(-\boldsymbol \alpha_E) = -\Gamma_R < 0$. 
    
    Second, denoting $\xi \equiv \lambda_+^{UB}$  it satisfies $\xi - \beta_E^{(0)} = \dfrac{\boldsymbol \beta_R}{\xi +\boldsymbol \alpha_E}.$ Now we evaluate on $\xi$ the function $\Pi$ with the parameters
    used for computing the upper bound for the positive eigenvalues. With this choice we have
     $\displaystyle \Pi(\xi) = \dfrac{\boldsymbol \beta_R}{\xi + \boldsymbol \alpha_E}- \sum_{k=1}^{N}\frac{\beta_R^{(k)}}{\xi +\alpha_E^{(k)}}.$ Let $j$ such that $\boldsymbol \alpha_E = \min_k \alpha_E^{(k)} = \alpha_E^{(j)}$, then
     \[ \Pi(\xi) = 
     \frac{\boldsymbol\beta_R}{\xi + \alpha_E^{(j)}}- \sum_{k=1}^{N}\frac{\beta_R^{(k)}}{\xi +\alpha_E^{(k)}} \ge
     \frac{\boldsymbol\beta_R}{\xi + \alpha_E^{(j)}}- \sum_{k=1}^{N}\frac{\beta_R^{(k)}}{\xi +\alpha_E^{(j)}} = \frac{\beta_R - \sum_{k=1}^N \beta_R^{(k)}}{\xi + \alpha_E^{(j)}} \ge 0,\]
     showing that $\lambda_+^{UB}  \ge \mu_+^{UB}.$ 
     
     Analogously, setting $\xi = \lambda_-^{LB}$, and evaluating
     the function $\Pi$, with the parameters used for computing the smallest negative eigenvalue, on $\xi$, yields,
      \[ \Pi(\xi) = 
     \frac{-\boldsymbol\beta_R}{-\xi -\beta_E^{(j)}}+ \sum_{k=1}^{N}\frac{\beta_R^{(k)}}{-\xi -\beta_E^{(k)}} \le
     \frac{-\boldsymbol\beta_R}{-\xi -\beta_E^{(j)}}+\sum_{k=1}^{N}\frac{\beta_R^{(k)}}{-\xi -\beta_E^{(j)}} = \frac{-\beta_R+\sum_{k=1}^N \beta_R^{(k)}}{-\xi- \beta_E^{(j)}} \le 0,\]
     where $j$ is such that $\boldsymbol \beta_E = \max_k \beta_E^{(k)} = \beta_E^{(j)}$. This shows that 
$\lambda_-^{LB}  \le \mu_-^{LB}.$
     \end{proof}

   \subsection{Validation of the bounds on synthetic test cases}
   To check the validity of the bounds we constructed a test case with a $5 \times 5$  arrow matrix with blocks
   of decreasing sizes: [1000 400 200 120 80]. The diagonal blocks $A_k, k = 0, \ldots, 4$ are diagonal matrix with uniformly distributed elements: $|(A_k)_{ii}| \in [a_k^{\min}, a_k^{\max}]$. $B_k, k = 1, \ldots, 4$ are sparse random matrices obtained with the Matlab command \texttt{$B_k$ = sprand($n_k, n_1, 0.1$)} and normalized so to have all norm equal to 10. The values of $a_k^{\min}, a_k^{\max}$ are reported in Table \ref{lambda}. 
   \begin{table}[h!]
	   \begin{center}
       \begin{tabular}{cccccc}
       $k$  & 0& 1 & 2 & 3 & 4 \\ \hline
       $a_k^{\min}$& 0.1 &  0.5 &  0.3 &  0.8&  0.01\\
             $a_k^{\max}$  &   2.1 &5.5 &1.3 &1.8 &1.25 \\
       \end{tabular}
	   \end{center}
\caption{Extremal eigenvalues of the matrices $A_k, k = 0, \ldots, 4$.}
   \label{lambda}
   \end{table}

   \noindent
   In the first experiment we choose as the preconditioner 
   \[ \mathcal P_D = \text{blkdiag($A_0$, blkdiag($A_k + B_k A_0^{-1} B_k^\top$))}.\]
   We compute the indicators, which are reported below
   \begin{center}
       
   \begin{minipage}{6cm}     
\begin{tabular}{c|rrrr}
  $k$    & $\alpha_R^{(k)} $ &  $\beta_R^{(k)} $ &  $\alpha_E^{(k)} $ &  $\beta_E^{(k)} $ \\
\hline
   1  & 0.076   &  0.972  &   0.028   &  0.924 \\
   2  & 0.138   &   0.950 &     0.050 &    0.862 \\
   3  & 0.171   &   0.893 &      0.107&     0.829 \\
   4 &  0.262   & 0.991   &  0.009    & 0.738 \\
\end{tabular}
   \end{minipage}
\hspace{2cm}
\begin{minipage}{6cm}
    Moreover we have \\
	$\boldsymbol \alpha_R = 7.65 \times 10^{-3}$\\[.6em]
    $\alpha_E^{(0)} = \beta_E^{(0)} = 1$.\\
    $\boldsymbol \alpha_E = \min\{\alpha_E^{(k)}, k > 0 \}$ = 0.009.
\end{minipage}

   \end{center}
\medskip
  
\noindent
  Applying Corollary \ref{corollary}, we found the following true eigenvalues and bounds:
  \smallskip

\begin{center}
\begin{tabular}{l|cccc}
	\hline
True eigenvalues  &     -1.4902  &    -0.0621     &  1    &   2.4206\\
Bounds &      -1.5441  &  -0.0165  &          1  &     2.4968\\
\end{tabular}
\end{center}
The MINRES solver with preconditioner $\mathcal P_D$ converged in 65 iterations after which the relative 
residual norm was below $10^{-8}$.

\noindent
The second test case considers the matrix above, while the preconditioner is 
$\mathcal P_D = \text{blkdiag($I$, blkdiag($A_k + B_k B_k^\top$))}$, with the following values of the indicators:
\begin{center}
\begin{minipage}{6cm}
\begin{tabular}{c|rrrr}
  $k$    & $\alpha_R^{(k)} $ &  $\beta_R^{(k)} $ &  $\alpha_E^{(k)} $ &  $\beta_E^{(k)} $ \\
\hline

  1  &  0.062  &    0.952 & 0.048   &     0.938 \\
  2  &  0.135  &    0.932 & 0.068   &     0.865   \\
  3  &  0.166  &    0.868 & 0.132   &     0.834   \\
  4  &  0.197  &    0.989 & 0.011   &     0.803     \\
\end{tabular}
   \end{minipage}
\hspace{2cm}
\begin{minipage}{6cm}
    Moreover we have \\
	$\boldsymbol \alpha_R = 8.76 \times 10^{-3}$\\[.6em]
    $\alpha_E^{(0)} = 0.1, \ \beta_E^{(0)} = 2.1$.\\
    $\boldsymbol \alpha_E = \min\{\alpha_E^{(k)}, k > 0 \}$ = 0.011.
\end{minipage}
\end{center}

\medskip

\noindent
  Again, invoking Corollary \ref{corollary}, we found the following true eigenvalues and bounds:
  \smallskip

\begin{center}
\begin{tabular}{l|cccc}
	\hline
True eigenvalues  &     -1.4992  &    -0.0946     &  0.2513    &   2.6328\\
Bounds &      -1.3737  &  -0.0153  &      0.1000  &     3.2344\\
\end{tabular}
\end{center}
For this case, in which the preconditioner Schur complements are applied inexactly (with    $A_0$ approximated by the identity matrix), the MINRES solver met
the requirement on the relative residual after 101 iterations.

All in all, the bounds reveal very close to the computed extremal eigenvalues, the upper bound for the negative eigenvalues being the loosest of the four.

\section{Application of the analysis to double saddle-point linear systems}
\label{sec:double}

\subsection{Double saddle-point linear system with zero diagonal blocks}
We consider a simplified setting, in which $N=2$, $A_1 = A_2 = 0$. 
 In this case,
\eqref{eq1} simplifies to
\begin{eqnarray}
E_0 u_1 + R_1 ^\top u_2 + R_2^\top u_3 &=& \lambda u_1\nonumber \\
R_1 u_1 & = & \lambda u_2\label{zeroblocks}\\ 
R_2 u_1 & = & \lambda u_3  \nonumber
\end{eqnarray}
Following the proof of Theorem \ref{theo2}, if $\lambda \not \in [\alpha_E^{(0)}, \beta_E^{(0}]$ then it satisfies $\Pi(\lambda) = 0$, that is
%If $R_1 u_1 = R_2 u_1 = 0$, then, premultiplying the first equation by $u_1^\top$ yields $\lambda = 1$. In the most
%general case, we can write the function $\Pi(\lambda)$ which reads 
\begin{equation}
    \label{eqdouble}
 \lambda - \gamma_E^{(0)}  -  \frac{\gamma_R^{(1)}}{\lambda} - \frac{\gamma_R^{(2)}}{\lambda}=0. \end{equation}
Application of Corollary 1, together with a straightforward analysis of the 2nd order equation derived by the previous, provides $\lambda \in [\mu_-^{\text{LB}},\lambda_-^{UB}) \cup [\alpha_E^{(0)},\mu_+^{\text{UB}}]$, \ where \
\begin{equation} \label{bounds} \mu_-^{\text{LB}} = \frac{\alpha_E^{(0)} - \zeta_-}{2}, \quad \lambda_-^{UB} = -\frac{2 \boldsymbol \alpha_R}
{\beta_E^{(0)} + \sqrt{\beta_E^{(0)} + 4 \boldsymbol \alpha_R}}, \quad 
\mu_+^{\text{UB}} = \frac{\beta_E^{(0)} + \zeta_+}{2}, \ \text{and}\end{equation}
\[  \zeta_- = \sqrt{(\alpha_E^{(0)})^2+ 4 \beta_R^{(1)} +4\beta_R^{(2)}},
\  \zeta_+ = \sqrt{(\beta_E^{(0)})^2+ 4 \beta_R^{(1)} +4\beta_R^{(2)}}.
 \] 
\noindent
\subsubsection{Forward incomplete preconditioner} Considering the  exact (incomplete) preconditioner \[\mathcal P_D =   \text{blkdiag}(A_0, B_1 A_0^{-1}B_1^\top, B_2 A_0^{-1}B_2^\top),\]
we have that $E_0 = R_1R_1^\top = R_2 R_2^\top = I$, all the indicators take the value one and therefore
\[ \lambda \in (-1,\lambda_-^{UB}) \cup [1,2],\]
with $\lambda_-^{UB} = -\dfrac{2 \boldsymbol \alpha_R}
{1 + \sqrt{1 + 4 \boldsymbol \alpha_R}}$. This result agrees with the bound in \cite[Theorem 4.1]{Benzi2018}.
%the one in \cite[Theorem 8]{BradleyGreif2026}. If $\boldsymbol \alpha_R$ is small then a simpler bounds is 
%$\lambda_-^{UB} = -\dfrac{\boldsymbol \alpha_R}{1+\boldsymbol \alpha_R}$.

\subsubsection{Forward full preconditioner} $\mathcal P_D$ is as before, except in the definition of $S_2$ which is now, following \eqref{Sk},
\[S_2 = B_2 A_0^{-1}B_2^\top - B_2 A_0^{-1} B^\top_1 (B_1 A_0^{-1} B_1^\top)^{-1} B_1 A_0^{-1} B_2^\top. \]
As before, $R_1 R_1^\top = I$. Regarding $R_2 R_2^\top = S_2^{-1/2} B_2 A_0^{-1} B_2^\top S_2^{-1/2}$, we can characterize its extremal eigenvalues considering
\[ X = (B_2 A_0^{-1} B_2^\top)^{-1} S_2 = I - (B_2 A_0^{-1} B_2^\top)^{-1}B_2 A_0^{-1} B^\top_1 (B_1 A_0^{-1} B_1^\top)^{-1} B_1 A_0^{-1} B_2^\top \equiv I - X_0,\]
which is similar to the inverse of $R_2 R_2^\top$. By the Cauchy-Schwartz inequality, and by Assumption \ref{ass2}, $\lambda_{\max} (X_0) < 1$ therefore $\sigma(X) \in (0,1]$ and  $\alpha_R^{(0)} = 1$, with $\beta_R^{(2)}$ potentially large. The bounds in \eqref{bounds} read in this case:
\[\mu_-^{\text{LB}} = \frac{1 - \sqrt{5 + 4 \beta_R^{(2)}}}{2}, \quad \lambda_-^{UB} = -\frac{2 \boldsymbol \alpha_R}
{1 + \sqrt{1 + 4 \boldsymbol \alpha_R}}, \quad 
\mu_+^{\text{UB}} = \frac{1 + \sqrt{5 +4\beta_R^{(2)}}}{2}.
\]
Note that this analysis completes the eigenvalue characterization in \cite[Theorem 8]{BradleyGreif2026}, which is limited to counting the multiplicity of a few selected eigenvalues. 
\smallskip

\noindent
\subsubsection{Reverse preconditioner}
\label{reverse}
Taking now into account the reverse Schur complement preconditioner we could set $\widehat A_1 = \delta I, \widehat A_2 = \delta I$, with $\delta$ small, since $A_1 = A_2 = 0$. Hence \[\mathcal P_D = \text{blkdiag}(\widehat S_0, \widehat A_1, \widehat A_2) =\text{blkdiag} (A_0 + \delta^{-1} B_1^\top B_1 + \delta^{-1}{B_2^\top B_2}, \delta I_{n_1}, \delta I_{n_2}).\]
We have $\bR^\top \bR = I - E_0$, therefore $\gamma_R^{(1)} + \gamma_R^{(2)} = 1 - \gamma_E^{(0)} $, and \eqref{eqdouble} becomes \[ 0 = \lambda^2 - \lambda \gamma_E^{(0)} - 1 + \gamma_E^{(0)} = (\lambda-1)(\lambda +1 - \gamma_E^{(0)}), \]
so that 1 is the only positive eigenvalues, while all negative ones take the form $\lambda = \gamma_E^{(0)}-1$. $E_0$ is similar to
\[(\widehat A_0^{-1/2} \widehat S_0 \widehat A_0^{-1/2})^{-1} = (I + \delta^{-1} A_0^{-1/2}{B_1^\top B_1}A_0^{-1/2}
+ \delta^{-1} A_0^{-1/2}{B_2^\top B_2}A_0^{-1/2})^{-1} \equiv (I + \delta^{-1} W)^{-1}=\delta(W + \delta I)^{-1}.\]
Then \[ \gamma_E^{(0)} = \delta \frac{w^\top (W+ \delta I)^{-1} w}{w^\top w} =
\delta \frac{z^\top  z}{z^\top W z + \delta z^\top z} = \delta \left(\frac{z^\top W z}{z^\top z} + \delta\right)^{-1}
= O(\delta).\]    
We conclude that the preconditioned matrix has one positive eigenvalue at one and negative eigenvalues characterized 
as $O(\delta)-1$.
\subsection{Numerical example} 

\noindent
We notice that $\boldsymbol \alpha_R$ can be arbitrarily small, also in the exact case, depending on the angle
between the subspaces generated by $R_1^\top$ and $R_2^\top$, which influences the behavior of the iterative solvers.
To show this we conducted a simple experiment in solving a linear system of the form \eqref{zeroblocks} with sizes $n_0 = 2000; n_1 = 1200; n_2 = 600$. The coefficient matrix $\mathcal A$ is constructed with $A_0 = I, B_k = \texttt{sprand($n_k, n_0, 0.1$)}$.

\noindent
\textbf{Forward preconditioner}. 
The preconditioner considers:
\begin{enumerate}[(a)]
\item the exact full Schur complements, see \eqref{Sk}: 
 $\mathcal P_D = \text{blkdiag}(I, B_1 B_1^\top, B_2 B_2^{\top} - B_2 B_1^\top (B_1  B_1^\top)^{-1} B_1 B_2^\top)$.
\item the exact incomplete Schur complements $\mathcal P_D = \text{blkdiag}(I, B_1 B_1^\top, B_2 B_2^{\top})$.
 \end{enumerate}
 We found the following eigenvalue distribution of the preconditioned matrix (true eigenvalues and bounds) together with the iterations required by the MINRES solver:
\begin{center}
\begin{tabular}{l|lccl|r||lccl|r}
                &  \multicolumn{4}{c}{Full} &its& \multicolumn{4}{c}{incomplete}& its\\ \hline
Bounds             &  -16.588    &-3.43$\times 10^{-3}$ & 1 & 17.588  & 1248 & -1       & $-1.721 \times 10^{-3}$ & 1 & 2 & 277\\
True &    -16.588    &-3.42$\times 10^{-3}$   &    1&     17.588 &  &  -0.9994     & $-1.718 \times 10^{-3}$    &   1  &     1.9994&\\
\end{tabular}
\end{center}
%If we change the linear system with this symmetrical scaling
%$\mathcal A \longrightarrow T^{1/2} A T^{1/2} $, with $T = text{blkdiag}(I, I, Z) $ and
%$Z = $
%
\smallskip

\noindent
The incomplete preconditioner outperforms its full counterpart. The predicted bounds  perfectly describe
the eigenvalues of the preconditioned matrices.

\noindent
\textbf{Reverse preconditioner}. We computed the eigenvalue distribution for values of $\delta = 10^{-i}, i = 0, 1, 2$, and run the preconditioned MINRES for the same instances with the following outcomes:
\begin{center}
    \begin{tabular}{l|ccc|r}
        $\delta$ &  $\lambda_-^{LB}$ &$\lambda_-^{UB}$ & $1+\lambda_-^{UB}$ & its  \\ \hline
         1    & -0.9999 & -0.1370 & 0.8630 &  14 \\
          0.1  & -1.0000 & -0.6135 & 0.3865 & 7\\
          0.01 & -1.0000 & -0.9407 & 0.0593 & 4\\
        \end{tabular}
\end{center}
which shows that the largest negative eigenvalue approaches $-1$ as $\delta$ becomes small. Apparently, the reverse preconditioner is more convenient than 
the forward instances, since it requires  a considerably smaller number of iterations. 
As well known, however, the eigenvalue distribution (and therefore the predicted number of iterations) is not the only parameter to judge a preconditioner. Other important
issues such that the cost of the preconditioner application and its conditioning 
must be taken into account. This will be clear in \ref{sec:liquid} where we will
address a realistic problem.
% Finally, to highlight the importance of the mutual relation between $R_1 (B_1)$ and $R_2 (B_2)$, we changed the previous setting by modifying $B_2$ as/
%\[ B_2 = B_2 (I - B_1^\top (B_1^\top B_1)^{-1} B_1), \qquad \text{such that} \ B_2 B_1^\top = 0.\]
%We obtained a preconditioned matrix with three distinct eigenvalues $\left\{-0.618, 1, 1.618\right\}$ and
%convergence of MINRES in 3 iterations.
\subsection{Liquid Crystal director modeling}
\label{sec:liquid}
In liquid crystal director modeling, we consider the problem of modeling equilibrium configurations of nematic
liquid crystals \cite{ramage2013preconditioned,BenziLC} in which the unit-vector
constraints on the director field couples with an electric field via the electrostatic potential U . After
discretization and Newton linearization, the double saddle-point coefficient matrix reads as
\[ \mathcal{A} = \begin{bmatrix} A_0 & B_1^\top & B_2^\top \\ B_1 & 0 & 0 \\ B_2 &0 & -A_2\end{bmatrix},\]
with the sizes of the blocks usually satisfy $n_0 = 3n_1; n_2 = n_1$, with the overall size
of the system $m = 5 n_0$. The test case we consider is based on a particular selection of the parameters which we take from \cite{BenziLC}.

\noindent
\textbf{Forward preconditioner}. We first consider  the following forward incomplete Schur complement preconditioner:
\[\mathcal P_D = \text{blkdiag} (A_0, (B_1 A_0 B_1^\top)^{-1}, A_2)\]
The choice of $S_1$ is based on the fact, observed in \cite{BenziLC}, that $B_1 A_0 B_1^\top \approx (B_1 A_0^{-1} B_1^\top)^{-1}$,
$S_2 = A_2$ is motivated by the different order of magnitudes between $A_2$  and $B_2 A_0^{-1} B_2^\top$.
The application of the preconditioner turns out very cheap since it consists in (a) solution with $A_0$, whose exact Cholesky factor dos not introduce any fill-in, (b) multiplication by $B_1 A_0 B_1^\top$, and (c) solution with a tridiagonal matrix $A_2$. What about the eigenvalue distribution? The results that follow refer
to a case in which $n_0 = 3069$ the overall dimension being $N = 5115$.
The indicators are here: $\alpha_E^{(0)} = \beta_E^{(0)} = 1$; $\alpha_E^{(2)} = \beta_E^{(2)} = 1$;
$\alpha_R^{(1)} = 1, \beta_R^{(1)} = 1.394$; 
$\alpha_R^{(2)} = 1.03\times 10^{-7}, \beta_R^{(2)} = 0.175$. 

From these values we have the following bounds
$\lambda \in [-1.177,0] \ \cup [1, 1.8257]$, compared to the true eigenvalue endpoints which are
$  [-1.1514  -0.60184]\cup   [1,   1.8093]$. Three out of four bounds are very close to the exact eigenvalues.

\noindent
\textbf{Reverse preconditioner}. 
Following the discussion in \ref{reverse}, the preconditioner is
\[ \mathcal P_D = \text{blkdiag}(A_0 + \delta ^{-1} B_1 B_1^\top, \delta I , A_2). \]
Application of $S_0$ is the most consuming task in the whole $\mathcal P_D$ application. It can be
carried on by a direct solver or, taking into account the Sherman-Morrison-Woodbury (SMW) formula, by writing
the explicit inverse of $S_0$ as
\[ (A_0 + \delta ^{-1} B_1 B_1^\top)^{-1} = A_0^{-1}\left(I - B_1^\top (\delta I + \underbrace{B_1 A_0^{-1} B_1^\top}_{\Sigma})^{-1} B_1 A_0^{-1} \right)=
A_0^{-1}\left(I - B_1^\top (\delta I  + \Sigma)^{-1} B_1 A_0^{-1} \right). \]
We can perform the following approximation
\[ (\delta I + \Sigma)^{-1} = \Sigma^{-1} (I + \delta \Sigma^{-1})^{-1}\approx
\Sigma^{-1} (I - \delta \Sigma^{-1}),\]
Solution of a system like $(\delta I + \Sigma) v = r$ can therefore be obtained by solving two linear systems with $\Sigma$ as shown in Algorithm \ref{algSigma}
\begin{algorithm}
    \begin{algorithmic}[h!]
        \STATE solve $\Sigma z = r$ by the PCG  with preconditioner $B_1 A_0 B_1^\top$, and tolerance $10^{-14}$.
        \STATE $w = r - \delta z$
        \STATE solve $\Sigma v = w$ by the PCG  with preconditioner $B_1 A_0 B_1^\top$, and tolerance $10^{-14}$.
    \end{algorithmic}
    \caption{Solution of the system $(\delta I + \Sigma) v = r$.}
    \label{algSigma}
\end{algorithm}

\noindent
$\Sigma$ is never formed explicitly, we provided the PCG with a function handle implementing the application of $\Sigma$ to a vector.
We selected $\delta = 10^{-6}$. With this value, the exact eigenvalue intervals are (again for $N = 5155$) $[-1.023, -0.9959] \cup [1, 1.023]$, predicting very
fast convergence of the MINRES method.

The two described preconditioners have been used in combination with the MINRES iterative solver for test cases
with size up to $m = 10\,485\,755$. We fixed the exact solution to be the vector of all ones. We compared these with the block triangular preconditioner (called $\mathcal P_T$)
in \cite{BenziLC}, used to accelerate the (right preconditioned) GMRES solver. This preconditioner is written as
\[ \mathcal P_T = \begin{bmatrix} A_0 & B_1^\top & B_2^\top \\ \gamma B_1 & 0 & 0 \\ 0 & 0  & -A_2\end{bmatrix},
\] and the subsystem $\begin{bmatrix} A_0 &  B_1^\top\\ \gamma B_1 & 0 \end{bmatrix} 
\begin{bmatrix} w_1 \\ w_2\end{bmatrix} = \begin{bmatrix} r_1 \\ r_2\end{bmatrix}$ solved inexactly in
the preconditioner application. To make the correct comparison with our solver, we slightly changed the implementation of this task (thus improving its efficiency), in the solution of the Schur complement $B_1 A_0^{-1} B_1^\top$ which, in \cite{BenziLC} is obtained by the PCG preconditioned with $B_1A_0 B_1^\top$. Here we use the latter matrix to directly approximate the inverse of the Schur complement, as described in Algorithm \ref{alg1}.
\begin{algorithm}
\begin{algorithmic}[h!] 
\STATE Solve  $A_0 z = r_1$
\STATE Compute $w_2  = (B_1 A_0 B_1^\top) \left(B_1 z - \frac{1}{\gamma} r_2\right)$
\STATE Solve $A_0 w_1 = r_1 - B_1^\top w_2 $
\end{algorithmic}
\caption{Solution of the $2 \times 2$ subsystem.}
\label{alg1}
\end{algorithm}

\noindent
This choice allows the use of GMRES (instead of Flexible GMRES).
 
 In both cases (MINRES/GMRES) the exit test is achieved when the relative residual norm is below $10^{-12}$.
The results are summarized in Table \ref{tabris}, where we also report the value of the logarithm (in base 10) of the \textit{a posteriori} relative error. Regarding the block triangular preconditioner we used $\gamma =1$, which is the value suggested in \cite{BenziLC}. 
The numerical experiments have been conducted by a MATLAB (R2025b) implementation run on an  8-core model: Intel Core Ultra 7 258V at 4.8 GHz with 32 GB RAM.
\begin{table}[h!]
    \begin{tabular}{r|rrr|rrr|rrr|rrr|}
    & \multicolumn{9}{c}{MINRES} & \multicolumn{3}{|c}{GMRES} \\
&  \multicolumn{3}{c}{$\mathcal P_D^{\text{rev}} - S_0$} &\multicolumn{3}{c}{$\mathcal P_D^{\text{rev}} - $ SMW} &  
\multicolumn{3}{c}{$\mathcal P_D$} &
\multicolumn{3}{|c}{$\mathcal P_T$} \\
    $m$ & its & CPU & err  &  its& CPU & err  &its  & CPU & err  & its & CPU & err   \\
    \hline
    % 5115 &   11 &   0.01 &  -11 &   10 &   0.03 &   -8 &   26 &   0.00 &  -11 &  12  &   0.01 &  -10 \\ 
    % 10235 &   11 &   0.02 &  -11 &   10 &   0.05 &   -8 &   24 &   0.01 &  -10 &  12  &   0.02 &  -10 \\ 
    % 20475 &   11 &   0.03 &  -10 &   10 &   0.10 &   -8 &   27 &   0.02 &   -9 &  11  &   0.03 &   -9 \\ 
    % 40955 &   11 &   0.06 &  -11 &   10 &   0.21 &   -8 &   28 &   0.04 &   -9 &  15  &   0.05 &   -8 \\ 
     81915 &   11 &   0.14 &   -9 &   10 &   0.48 &   -8 &   30 &   0.07 &   -8 &  10  &   0.09 &   -6 \\ 
    163835 &   11 &   0.28 &   -9 &   12 &   1.18 &   -7 &   30 &   0.14 &   -7 &  10  &   0.19 &   -6 \\ 
    327675 &   11 &   0.60 &   -9 &   12 &   2.37 &   -7 &   30 &   0.28 &   -7 &  10  &   0.38 &   -5 \\ 
    655355 &   11 &   1.22 &   -8 &   16 &   6.69 &   -6 &   32 &   0.57 &   -5 &  10  &   0.72 &   -4 \\ 
   1310715 &   12 &   2.66 &   -9 &   18 &  19.60 &   -7 &   32 &   1.66 &   -5 &  10  &   2.80 &   -4 \\ 
   2621435 &   12 &   9.31 &   -8 &   20 &  37.54 &   -5 &   32 &   2.57 &   -4 &  10  &   3.45 &   -4 \\ 
   5242875 &   13 &  16.25 &   -7 &   \multicolumn{3}{c|}{$\dag$} &  32 &   5.88 &   -4 &   9  &  12.04 &   -2 \\ 
  10485755 &   14 &  40.56 &   -5 &   \multicolumn{3}{c|}{$\dag$}&   33 &  15.01 &   -2 &   9  &  28.18 &   -2 \\ 
    \end{tabular}
\caption{Number of iterations, CPU time and logarithm of the error norm for the described preconditioners in combination with MINRES and GMRES for increasing sizes. \\
$\dag$ means no convergence within 200 seconds.}
\label{tabris}

\end{table}

MINRES accelerated with $\mathcal P_D$ reveals the most efficient solver, even though  the number of iterations 
is not the smallest one. This is mainly due to the extremely cheap preconditioner, which requires only
one solution with $A_0$ at each iteration. By contrast, GMRES-$\mathcal P_T$ needs two solves with $A_0$ (see Algorithm \ref{alg1}) in addition
to the higher complexity of the GMRES solver. The reverse preconditioner (direct solve with $S_0$) takes a smaller number of iterations but it is inefficient due to the cost of the solution of a linear system with matrix $S_0$. Finally its SMW-based implementation reveals
impractical, mostly due to the ill-conditioning of $S_0$. Surprisingly, the most accurate combination
is given by the MINRES with the reverse block diagonal preconditioner, which provides an error norm of several orders of magnitude smaller than that provided by the other solvers.

%Finally, comparing the last columns in Table \ref{tabris} with  FGMRES ($\gamma=1$) in \cite[Table 8]{BenziLC},
%%he claim thar our implementation of $\mathcal P_T$ is more efficient is confirmed: in fact, considering
%5e.g. the case with $N=327675$ we employ $10 \times 2 = 20$ solves with $A$ compared with $4 \times 10 = 40$ solves
%(4 outer iterations with 10 PCG iterations each), the CPU times being 0.38 vs 0.91 seconds.

  \section{Downright-arrow structure}
\label{sec:downright}
Some multiple saddle-point matrices have the following structure
\[ \mathcal A = \begin{bmatrix} A_0 & && B_1^\top \\[.3em] & A_1 &   & B_2^\top\\  & & \ddots &  & \\B_1 &
B_2  & \ldots  & -A_N
\end{bmatrix} = \begin{bmatrix} \bA_- &  \widetilde{\bB}^\top\\ \widetilde{\bB} & - A_N\end{bmatrix},\qquad \begin{array}{lcl} 
\bA_-  &=& \text{blkdiag}(A_0, A_1, \ldots, A_{N-1}), \\[.4em] \widetilde{\bB} &=& \begin{bmatrix} B_1 & B_2 & \ldots & B_N \end{bmatrix} \end{array}.\]
Again, we need that $A_0$ is SPD, $A_k$ are symmetric positive semi-definite and $B_k$ have full row rank. 
In this framework, the usual constraint on the dimensions is $n_k \le n_{k-1}, \ k \ge 1$, and
Assumptions \ref{ass1} and \ref{ass2} are no longer required. The Schur complements
are, in this case:
\[ S_k = A_k, \ k = 0, \ldots, N-1, \quad  S_N  = A_N + \sum_{k=1}^N B_k A_k^{-1} B_k^\top
= A_N + \bBt \bA_-^{-1} \bBt^\top.\]
As also observed in \cite{BradleyGreif2026} for double saddle-point matrices, this matrix can be symmetrically
partitioned into a block tridiagonal one, whose eigenvalue analysis is well understood, see e.g.  \cite{bergamaschi2025eigenvalue, pilotto-et-al2026}. However, 
in the line of Section 4.2, we can address the linear system without permutation, the eigenvalues of the preconditioned matrix being as those of $\mathcal Q$
where
\begin{equation}
\label{Qdown}
    \begin{aligned}
    \mathcal{Q}&= {\mathcal{P}}_D^{-1/2} \mathcal{A} {\mathcal{P}}_D^{-1/2}=\left[\begin{array}{ccccc}
        \bE_- & \bRt^\top\\
        \bRt & - E_N\\
    \end{array}\right], \qquad \mathcal P_D = \begin{bmatrix} \mathbf{\widehat {A}}_- & 0 \\ 0 & \widehat S_N\end{bmatrix}, \quad \bRt = \begin{bmatrix} R_1 & R_2 & \ldots & R_N \end{bmatrix}
    \end{aligned}
\end{equation}
and
\begin{eqnarray*}
\bE_- = \mathbf{\widehat A}_-^{-1/2} \mathbf{A}_- \mathbf{\widehat A}_-^{-1/2}, && \widehat S_N = A_N +\bBt \mathbf  A_-^{-1} \bBt^\top, \quad E_N = \widehat S_N^{-1/2}  A_N \widehat S_N^{-1/2}\qquad \bRt = \widehat S_N^{-1/2} \bBt \mathbf{\widehat{A}}_-^{-1/2}, \quad \\
\end{eqnarray*}
from which \[ \sum_{k=1}^N R_k R_k^\top = \bRt \bRt^\top  = \widehat S_N^{-1/2} \bBt\mathbf{\widehat{A}}_-^{-1} \bBt^{\top} \widehat S_N^{-1/2} = \widehat S_N^{-1/2}(\widehat S_N - A_N)\widehat S_N^{-1/2}
= I - E_N.\]

 %    \subsection{Eigenvalues of the downright arrow matrices}
We wish to characterize the eigenvalues of \eqref{Qdown}:
\[    
    \begin{bmatrix}
                \bE_- & \mathbf{\widetilde{R}}^\top\\
        \mathbf{\widetilde{R}} & - E_N\\
    \end{bmatrix} \begin{bmatrix} \boldsymbol u_- \\ u_N \end{bmatrix}
=   \lambda\begin{bmatrix} \boldsymbol u_- \\ u_N \end{bmatrix}.
\] 
Let $\lambda \not \in [\boldsymbol \alpha_E, \boldsymbol \beta_E]$, then $\bE_- - \lambda \boldsymbol I$
is definite and hence invertible. We can write
\[ \boldsymbol u_- = (\lambda \mathbf I - \mathbf E_-)^{-1} \mathbf {\widetilde{R}}^\top u_N. \]
Upon substitution, the first equation reads
\[(  E_N + \lambda I - \mathbf {\widetilde{R}}(\lambda \mathbf I - \mathbf E_-)^{-1} \mathbf {\widetilde{R}}^\top)u_N = 0 .\]
Premultiplying by $\dfrac{u_N^T}{u_N^T u_N}$, defining $s_k = R_k^\top u_N$, and applying Lemma \ref{Lemma1} to $\dfrac{s_k^\top (\lambda I - E_{k-1})^{-1}s_k}{s_k^\top s_k}$ yield:
\begin{equation} \label{downright}
0 = \frac{u_N^\top E_N u_N}{u_N^\top u_N} + \lambda - \sum_{k=1}^N \frac{1}{\lambda - \dfrac{v_k^\top E_{k-1} v_k}{v_k^\top v_k}} \frac{u_N^\top R_k R_k^\top u_N}{u_N^\top u_N} = 
\lambda + \gamma_E^{(N)} - \sum_{k=1}^{N}\frac{\gamma_R^{(k)}}{\lambda- \gamma_E^{(k-1)}} \equiv \Phi(\lambda).\end{equation}
The function $\Phi: (-\infty, \boldsymbol \alpha_E) \cup (\boldsymbol \beta_E, + \infty) \to \mathbb R$ is derived from $\Pi$ \eqref{Pidef} by considering the change of variable $\gamma_E^{(j)} \to -\gamma_E^{(N-j)}$.
\subsection{PDE-constrained optimization example}
We now test the results of Section \ref{sec:downright} on the following PDE-constrained optimization example (see work such as \cite[Sec. 5]{Bergamaschi-COAP-2024}, \cite{MNN}, and \cite[Sec. 4.1]{pearson2023symmetric}):
\begin{equation*}
\min_{y,u} ~~ \frac{1}{2}\left\|y-\widehat{y}\right\|_{L^2(\partial\Omega)}^2+\frac{\beta}{2}\left\|u\right\|_{L^2(\Omega)}^2 \qquad \text{s.t.} \quad \left\{\begin{array}{rl}
-\Delta y+y+u=0 & \text{in }\Omega, \\
\frac{\partial y}{\partial n}=0 & \text{on }\partial\Omega. \\
\end{array}\right.
\end{equation*}
Here, the domain $\Omega := (0,1)^2$ with boundary $\partial\Omega$, $y$ and $u$ denote \emph{state} and \emph{control variables}, and $\beta$ is a (positive) regularization parameter. The function $\widehat{y}$ denotes a (given) \emph{desired state}, which for this example is obtained by solving the forward problem with control $4x_1(1-x_1)+x_2$, with $x_1$ and $x_2$ denoting the spatial coordinates.

Discretizing this problem using $P_1$ finite element basis functions may be cast as a double saddle-point linear system with downright-arrow structure:
\begin{equation}\label{PDECO_system}
\left[\begin{array}{ccc}
\beta M & 0 & M \\ 0 & M & L \\ M & L & 0 \\
\end{array}\right]\left[\begin{array}{c}
u_h \\ y_h \\ p_h \\
\end{array}\right]=\left[\begin{array}{c}
0 \\ \widehat{y}_h \\ 0 \\
\end{array}\right],
\end{equation}
where $y_h$, $u_h$, $p_h$, and $\widehat{y}_h$ denote the discretized state, control, \emph{adjoint}, and desired state, $M$ denotes a finite element mass matrix, and $L$ is the sum of finite element stiffness and mass matrices. Denoting with $n_0$ the size of both $M$ and $L$, the overall size of the linear system
is $m = 3 n_0$. In this experiment, we precondition \eqref{PDECO_system} with
%\textcolor{blue}{LB: I notice that the preconditioner in this test case is lower triangular, while it is upper triangular in our settings}
\begin{equation*}
\mathcal{P} = \left[\begin{array}{ccc}
\beta \widehat{M} & 0& 0 \\ 0 & \widehat{M} & 0 \\ 0 & 0 & \widehat{S}_2\\
\end{array}\right],
\end{equation*}
where $\widehat{M}$ denotes the lumped mass matrix $M$, which provides
$[\alpha_E^{(0)},\beta_E^{(0)}] \in [0.25,1]$ \cite{wathen87}, and $\widehat{S_2}$ is aimed at approximating
the exact Schur complement $ S_2 = \dfrac{1}{\beta }M + {L} M^{-1} L $.
To approximate $S_2$, we take
\begin{equation*}
\widehat{S}_2 = \frac{3}{4} \left( \frac{1}{\sqrt{\beta}} M  + L \right)_{\text{AMG}} \, M^{-1} \, \left( \frac{1}{\sqrt{\beta}} M + L\right)_{\text{AMG}},
\end{equation*} 
where $(\cdot)_{\text{AMG}}$ denotes the application of 2 V-cycles of the \texttt{HSL\_MI20} algebraic multigrid solver \cite{HSL_MI20,HSL_MI20_code} to a given matrix, with 2 symmetric Gauss--Seidel iterations serving as a pre- and post-smoother. The constant $\frac{3}{4}$ is included to ensure the eigenvalues of the preconditioned matrix $X$ (if the $L + \beta^{-1/2} M$ matrices were approximated exactly) are contained within the interval $[\frac{2}{3}, \frac{4}{3}]$, that is symmetrically distributed about 1 (see \cite{PW12} for the result without this factor).
We can evaluate $\alpha_R^{(j)}, \beta_R^{(j)}, j = 1,2$ observing that
\begin{equation*}
	\begin{array}{lcllcl}
		R_1 & = & \widehat S_2^{-1/2} M \widehat{M}^{-1/2}/\sqrt{\beta},  &R_1 R_1^\top &=&\widehat S_2^{-1/2} M \widehat{M}^{-1} M \widehat S_2^{-1/2}/\beta \\ [.4em]
		R_2 & =&  \widehat S_2^{-1/2} L \widehat{M}^{-1/2}, &R_2 R_2^\top &=&\widehat S_2^{-1/2} L \widehat{M}^{-1} L \widehat S_2^{-1/2}  \\ [.4em]
		R_1 R_1^\top + R_2 R_2^\top &=& \multicolumn{4}{l}{\widehat S_2^{-1/2}M\widehat{M}^{-1} (\underbrace{M/\beta  + \widehat{M} M^{-1} L \widehat{M}^{-1} L}_{\text{\normalsize{$T$}}})\widehat S_2^{-1/2}}.
\end{array}
\end{equation*}
%Now \begin{eqnarray*}
	%R_1 R_1^\top + R_2 R_2^\top &=& \widehat S_2^{-1/2}M\widehat{M}^{-1} (\underbrace{M/\beta  + \widehat{M} M^{-1} L \widehat{M}^{-1} L}_{\text{\normalsize{$T$}}})\widehat S_2^{-1/2}.% =\\ &&\widehat S_2^{-1/2}M\widehat{M}^{-1} S_2^{-1} (\underbrace{M/\beta  +  \widehat{M} M^{-1} L \widehat{M}^{-1} L}_{T}) S_2 \widehat S_2^{-1/2}
 %\end{eqnarray*}
 We may notice that the matrix $T$ is $M/\beta $ (first addend in $S_2$) 
 plus a matrix which resembles the second addend of $S_2$. We can then approximate $T \approx S_2$
 and write
 \[ R_1 R_1^\top + R_2 R_2^\top \approx \widehat S_2^{-1/2}M\widehat{M}^{-1}  S_2 \widehat S_2^{-1/2},\]
the right-hand matrix being similar to $X = \widehat S_2^{-1} S_2 M\widehat{M}^{-1}$.
We can bound the eigenvalues of $X$ considering that  $\widehat S_2$ is obtained combining two approximations: the one coming from the matching strategy and  the other from the AMG preconditioning which clusters the eigenvalues of the preconditioned matrix in (roughly) $[0.9,1]$. Summarizing
\begin{eqnarray*}
\lambda_{\min}( \widehat S_2^{-1} S_2) \ge \frac 23 \times 0.9^2 = 0.54, && 
\lambda_{\max}( \widehat S_2^{-1} S_2) \le \dfrac 43 \times 1^2 = \dfrac 43;  \\
\lambda_{\min}( \widehat M^{-1} M) = 0.25, && 
\lambda_{\max}( \widehat M^{-1} M) =  1; \\
	 \lambda_{\min}(X) \ge  0.54 \cdot 0.25 = 0.135  \equiv \alpha_R^{(1)} + \alpha_R^{(2)}& &
 \lambda_{\max}(X) \le  \frac 43 \equiv   \beta_R^{(1)} + \beta_R^{(2)}
\end{eqnarray*}
%Finally $\lambda_{\min}(X) \ge  0.135 \equiv \alpha_R^{(1)} + \alpha_R^{(2)}
%, \ \ \lambda_{\max}(X) \le  \dfrac 43 \equiv \beta_R^{(1)} + \beta_R^{(2)}.$
Turning now to \eqref{downright}, this becomes
\[ \Phi(\lambda) = \lambda 
- \frac{\gamma_R^{(1)}}{\lambda - \gamma_E^{(0)}} 
- \frac{\gamma_R^{(2)}}{\lambda - \gamma_E^{(1)}}=0. 
\]
This function in increasing in $(-\infty,\alpha_E^{(1)}) \cup (\beta_E^{(1)}, +\infty)$ and satisfies $\Phi(0) > 0$.
The smallest eigenvalues is bounded by the negative zero of $\Phi$ with parameters $\beta_R^{(j)}$ and $\alpha_E^{(j)}$. Since $\alpha_E^{(0)} = \alpha_E^{(1)}$ the equation simplifies to
\[ \lambda^2 - \lambda \alpha_E^{(0)} - \beta_R^{(1)} - \beta_R^{(2)} = \lambda^2 - 0.25 \lambda - \frac 43 = 0, 
 \ \Longrightarrow \ \lambda_- = -1.0363.\]  The equation for the largest negative roots is
\[ \lambda^2 - \lambda \beta_E^{(0)} - \alpha_R^{(1)} - \alpha_R^{(2)} = \lambda^2 - \lambda - 0.135 = 0, 
 \ \Longrightarrow \ \lambda_- = -0.1205.\] 
 The smallest positive eigenvalue is bounded by $\alpha_E^{(0)} = 0.25$. Finally, for the largest positive eigenvalues
 we solve 
 \[ \lambda^2 - \lambda \beta_E^{(0)} - \beta_R^{(1)} - \beta_R^{(2)} = \lambda^2 - \lambda - \frac 43 = 0, 
 \ \Longrightarrow \ \lambda_+ = 1.7582.\] We now compare the bounds with the true computed eigenvalues
\begin{center}
 \begin{tabular}{lrrrr}
 Bounds                    & --1.0363   &   --0.1205   &    0.2500 &   1.7582\\ \hline
True ($\beta = 1$)         & --0.7166   &   --0.3985   &    0.2500 &   1.6641\\ 
True ($\beta = 10^{-3}$)   &  -0.7003   &   --0.3908   &    0.2500 &   1.6524
\end{tabular}
\end{center}
We can appreciate the adherence of the four bounds to the exact endpoints of the two intervals.
We finally report in Table \ref{tabPDE} the results in terms of number of iterations and CPU times of the MINRES solver
equipped with the block diagonal preconditioner for $\beta = 1, 10^{-3}$. The iteration is stopped whenever the relative residual norm is below $10^{-8}$.
\begin{table}[h!]
    \begin{center}
\begin{tabular}{r|rr|rr}
   & \multicolumn{2}{|c|}{$\beta = 1$} 
   & \multicolumn{2}{c}{$\beta = 10^{-3}$} \\
$m$ & its & CPU & its & CPU \\ \hline
   3267 &   55&  0.03 & 70&  0.03 \\
   12675&   57&  0.27 & 75&  0.21 \\  
   49923&   60&  0.48 & 78&  0.73 \\  
  198147&   65&  1.91 & 81&  2.42 \\  
  789507&   67&  8.13 & 86& 10.47 \\ 
 3151875&   71& 36.35 & 93& 47.71 \\ 
\end{tabular}
\end{center}
\caption{Number of iterations and CPU times in the solution of the PDE-constrained optimization problem with MINRES and the block
diagonal preconditioner.}
\label{tabPDE}
\end{table}

The almost perfect scalability of the preconditioned solver is appreciated by the very slight increase of the
number of iterations and, even more importantly, by the linear proportionality of the CPU times with the size of the problem.

\section{Conclusions}
\label{sec:conc}
We have characterized the Schur complement blocks in multiple saddle-point matrices having an arrow  structure. The eigenvalues of such (block-diagonal preconditioned) matrices are shown to coincide with the zeros of suitable parametric rational functions,
whose complexity does not grow significantly with the number of blocks.
Simple evaluations of such zeros for particular choices of the parameters allow us to characterize the bounds of the two intervals (one negative and one positive) containing
the eigenvalues of the preconditioned matrices.

An important outcome of this study is that using the full Schur complement block diagonal preconditioner does not represent the optimal choice, not only because of the
extreme complexity of their definition (which could be hardly implemented in realistic settings), but also in terms of eigenvalue distribution and number of iterations. We proved that a straightforward approximation of the Schur complements produces a better preconditioner both in terms of application cost and of number of iterations.

Numerical results onto both synthetic and realistic (large size) test cases taken from the liquid crystal director modeling and a PDE-constrained optimization problem, show
the accuracy of the developed bounds, which are able to predict (in most cases) 
the behavior of the preconditioned iterative solver (MINRES in our case).

Future work aims at analyzing block triangular preconditioners for multiple saddle-point arrow matrices. This will pose a number of issues such that, among the others,
(a) characterizing complex eigenvalues (in view of the lost symmetry of the preconditioned matrix), (b) which blocks are worth neglecting in the preconditioner definition.

\end{document}